\DeclareMathAlphabet{\mathpzc}{OT1}{pzc}{m}{it}
\newcommand{\subfiguretitle}[1]{{\scriptsize{#1}} \\}
\newcommand{\R}{\mathbb{R}}                                      
\newcommand{\pd}[2]{\frac{\partial#1}{\partial#2}}               
\newcommand{\ts}{\hspace*{0.1em}}                                
\newcommand{\mc}[2][]{\mathpzc{#2}{\smash[t]{\mathstrut}}_{#1}}  
\newcommand\xqed[1]{\leavevmode\unskip\penalty9999 \hbox{}\nobreak\hfill \quad\hbox{#1}}
\newcommand{\exampleSymbol}{\xqed{$\triangle$}}
\DeclareMathOperator{\diag}{diag}
\newtheorem{theorem}{Theorem}[section]
\newtheorem{lemma}[theorem]{Lemma}
\newtheorem{proposition}[theorem]{Proposition}
\newtheorem{definition}[theorem]{Definition}
\theoremstyle{definition}
\newtheorem{example}[theorem]{Example}
\newtheorem{remark}[theorem]{Remark}
\newtheorem{textalgorithm}[theorem]{Algorithm}
\renewcommand*\env@matrix[1][*\c@MaxMatrixCols c]{%
  \hskip -\arraycolsep
  \let\@ifnextchar\new@ifnextchar
  \array{#1}}
\definecolor{boxback}{gray}{0.95}
\def\blfootnote{\gdef\@thefnmark{}\@footnotetext}
\begin{document}

\title{Koopman-based spectral clustering of \\ directed and time-evolving graphs}
\author[1]{Stefan Klus}
\author[2]{Nata\v sa Djurdjevac Conrad}

\affil[1]{School of Mathematical \& Computer Sciences, Heriot--Watt University, UK}
\affil[2]{Zuse Institute Berlin, Germany}

\date{}

\maketitle

\begin{abstract}
While spectral clustering algorithms for undirected graphs are well established and have been successfully applied to unsupervised machine learning problems ranging from image segmentation and genome sequencing to signal processing and social network analysis, clustering directed graphs remains notoriously difficult. Two of the main challenges are that the eigenvalues and eigenvectors of graph Laplacians associated with directed graphs are in general complex-valued and that there is no universally accepted definition of clusters in directed graphs. We first exploit relationships between the graph Laplacian and transfer operators and in particular between clusters in undirected graphs and metastable sets in stochastic dynamical systems and then use a generalization of the notion of metastability to derive clustering algorithms for directed and time-evolving graphs. The resulting clusters can be interpreted as \emph{coherent sets}, which play an important role in the analysis of transport and mixing processes in fluid flows.
\end{abstract}

\section{Introduction}

Spectral clustering is one of the most popular clustering techniques and---despite its sim\-plicity---often outperforms traditional clustering algorithms \cite{Luxburg07}. The goal is to identify groups of vertices in a graph that are highly connected to other vertices within the cluster, but only loosely coupled to other clusters. This can be viewed as an unsupervised learning problem. The graph could, for instance, represent the similarity between a set of given items or data points. Each item or data point corresponds to a vertex and two vertices are connected by an edge if they are similar. The degree of similarity can be represented by the associated edge weight. By applying clustering techniques to such a similarity graph, it is possible to identify groups of items or data points that share similar properties. If the similarity measure is symmetric (i.e., object $ A $ is similar to object $ B $ implies that object $ B $ is similar to object $ A $ and the weights are identical), then the resulting graph is undirected. Spectral clustering algorithms for undirected graphs are well understood and have been successfully applied to a host of different applications, see \cite{Luxburg07} for a detailed introduction and overview. If the similarity measure, however, is asymmetric, this results in a directed graph. A simple example is the internet, where website $ A $ might point to website $ B $, but not vice versa. Many different spectral clustering algorithms for directed graphs have been proposed over the last decades. A compelling idea is to turn the directed graph into an undirected graph and to then leverage state-of-the-art clustering techniques for undirected graphs. Many symmetrization approaches construct---often based on intuition or empirically determined hyperparameters---symmetric graph representations using, e.g., combinations of adjacency matrices, Laplacians, (reweighted) in- and out-degree matrices, and invariant distributions. Introducing all the proposed clustering algorithms in detail would go beyond the scope of this paper, we thus refer the reader to \cite{ZHS05, HZS06, MePe07, SaPa11, MaVa13, Ruedrich19} and references therein. Instead of constructing a symmetric matrix representation, a clustering approach that is based on computing dominant eigenvalues and eigenvectors of a complex-valued but Hermitian adjacency matrix is described in \cite{CLSZ20}.

Our approach relies on established dynamical systems theory and in particular the analysis of transfer operators---e.g., the Perron--Frobenius operator or the Koopman operator---that describe the evolution of a dynamical system \cite{Ko31, LaMa94, DJ99, BMM12, KKS16}. These methods have been successfully applied to high-dimensional molecular dynamics, fluid dynamics, and quantum mechanics problems \cite{RMBSH09, KBSS18, KNP22}, but also to stock-market, EEG, and traffic data \cite{HNDLG17, MSGR20, AvMe20}. A recent review of these methods can be found in \cite{KNKWKSN18}. Transfer operator approaches have also been used to analyze undirected graphs, e.g., for partitioning power networks~\cite{RSH16}, spectral network identification~\cite{Mauroy17}, and decentralized spectral clustering~\cite{ZKS22}. We will focus in particular on directed and time-evolving graphs and relationships between graph Laplacians and transfer operators.

It is well known that spectral clustering algorithms for undirected graphs can be interpreted in terms of random walks. The goal is to find a partition of the graph with the property that a random walker stays within a cluster for a long time and rarely jumps to other clusters \cite{Luxburg07, DjBCS11, SDjCBCS14, DjPhD}. This is equivalent to the detection of metastable sets in stochastic dynamical systems. Metastability is well defined if the process is in \emph{equilibrium}, which means that it is reversible with respect to its stationary distribution. The associated transfer operators are then self-adjoint with respect to appropriately defined inner products and the eigenvalues are consequently real-valued and can be interpreted as inherent time scales. However, in many cases the process is out of equilibrium as described in \cite{KWNS18}. The system might, for instance, be time-homogeneous but non-reversible or it might be time-inhomogeneous due to time-dependent energy potentials or external forces. In the graph setting, this corresponds to directed and time-evolving networks \cite{MPL17, PGCL18, HS19}. Non-equilibrium processes are in general not reversible and the eigenvalues of  transfer operators are complex-valued \cite{EmTr05, DjCWS16}. A natural extension of the definition of metastability to non-reversible and time-inhomogeneous systems is the notion of \emph{coherence}. Coherent sets \cite{Froyland13, AlPe15, BaKo17, FrJu18, KHMN19} are regions of the state space that are only slowly---compared to other sets---dispersed by the flow and play an essential role in the analysis of complex fluid flows. For the detection of such coherent sets, we need to analyze generalized transfer operators that are related to the forward-backward dynamics of the system \cite{BaKo17}. We will directly define these operators on graphs and also use the random walk perspective to extend spectral clustering algorithms to directed and time-evolving networks. This results in a generalized Laplacian, whose eigenvalues and eigenvectors again encode information about (potentially time-dependent) clusters. The main contributions of this work are:
\begin{itemize}[wide, itemindent=\parindent, itemsep=0ex, topsep=0.5ex]
\item We show that spectral clustering of undirected graphs corresponds to computing and analyzing eigenfunctions of the Koopman operator associated with the random walk process defined on the graph. The detected patterns can be interpreted as metastable sets.
\item We apply transfer operator theory to random walks on directed and time-evolving graphs and compute eigenfunctions of an operator that is related to the forward-backward dynamics. By clustering the eigenfunctions, we obtain coherent sets.
\item We construct benchmark problems by discretizing dynamical systems such as a rotating double-well problem and the quadruple gyre and show that random walkers starting within the same cluster will on average remain in close proximity over long time scales.
\item Furthermore, we analyze a time-evolving network that describes the social interactions of high-school students and show that the detected clusters correspond to the different specializations.
\end{itemize}
Our approach provides a clear physical interpretation of clusters in directed and time-evolving graphs and a principled way to evaluate the quality of the clustering. The remainder of the paper is structured as follows: In Section~\ref{sec:Transfer operators and graphs}, we will introduce transfer operators and directed and undirected graphs. In Section~\ref{sec:The forward-backward Laplacian}, we will show how transfer operator theory can be applied to graphs and how this relates to conventional spectral clustering techniques. Furthermore, we define the forward-backward Laplacian and analyze its properties. This allows us to extend spectral clustering methods to directed and time-evolving graphs as we will show in Section~\ref{sec:Spectral clustering}. All results will be illustrated with the aid of guiding examples and benchmark problems. Open questions and future work will be discussed in Section~\ref{sec:Conclusion}.

\section{Transfer operators and graphs}
\label{sec:Transfer operators and graphs}

Our goal is to define transfer operators on graphs and to apply data-driven methods for the approximation of these operators to time-series data generated by random walkers. In this section, the required concepts and notation will be introduced.

\subsection{Transfer operator theory}

Transfer operators such as the Perron--Frobenius operator or the Koopman operator describe the evolution of probability densities or observables of a dynamical system. Eigenvalues and eigenfunctions of these operators contain important information about global properties of the underlying system.

\subsubsection{Time-homogeneous systems}

Let $ \{ \ts X_t \ts \}_{t \ge 0} $ be a time-homogeneous stochastic process defined on the state space $ \mathbb{X} \subset \R^n $ and $ p_\tau \colon \mathbb{X} \times \mathbb{X} \to \R_{\ge 0} $ the \emph{transition density function} for a fixed \emph{lag time} $ \tau $ so that for every set $ \mathbb{A} $ it holds that
\begin{equation*}
    \mathbb{P}[X_{t+\tau} \in \mathbb{A} \mid X_t = x] = \intop_\mathbb{A} p_\tau(x,y) \ts \mathrm{d}y.
\end{equation*}
That is, $ p_\tau(x, y) $ is the probability density of $ X_{t+\tau} = y $ conditioned on $ X_t = x $.

\begin{definition}[Transfer operators]
Let $ \rho \in L^1(\mathbb{X}) $ be a probability density and $ f \in L^\infty(\mathbb{X}) $ an observable of the system.
\begin{enumerate}[wide, itemindent=\parindent, itemsep=0ex, topsep=0.5ex, label=\roman*)]
\item The \emph{Perron--Frobenius operator} $ \mathcal{P}_\tau \colon L^1(\mathbb{X}) \to L^1(\mathbb{X}) $ is given by
\begin{equation*}
    \mathcal{P}_\tau \rho(x) = \intop_{\mathbb{X}} p_{\tau}(y,x) \ts \rho(y) \ts \mathrm{d}y.
\end{equation*}
\item The \emph{Koopman operator} $ \mathcal{K}_\tau \colon L^\infty(\mathbb{X}) \to L^\infty(\mathbb{X}) $ is defined by
\begin{equation*}
    \mathcal{K}_\tau f(x) = \intop_{\mathbb{X}} p_{\tau}(x,y) \ts f(y) \ts \mathrm{d}y = \mathbb{E}[f(X_{t+\tau}) \mid X_t = x].
\end{equation*}
\end{enumerate}
\end{definition}

\begin{remark}
Assuming the system admits a unique invariant density $ \pi $, let $ u(x) \in L_\pi^1(\mathbb{X}) $ be a probability density with respect to the equilibrium density, then the \emph{Perron--Frobenius operator with respect to the equilibrium density} is defined by
\begin{equation*}
    \mathcal{T}_\tau u(x) = \intop_{\mathbb{X}} \frac{\pi(y)}{\pi(x)} \ts p_{\tau}(y, x) \ts u(y) \ts \mathrm{d}y.
\end{equation*}
\end{remark}

A dynamical system is said to be reversible if the so-called \emph{detailed balance condition}
\begin{equation*}
    \pi(x) \ts p_{\tau}(x, y) = \pi(y) \ts p_{\tau}(y, x)
\end{equation*}
is fulfilled for all $ x, y \in \mathbb{X} $. Roughly speaking, this means that the stochastic process is indistinguishable from its time-reversed counterpart. If the system is reversible---this is, for example, the case for classical molecular dynamics problems---, then the eigenvalues of the associated Perron--Frobenius operator and Koopman operator are real-valued and we can compute metastable sets by applying clustering techniques to the dominant eigenfunctions of these operators \cite{KKS16, KNKWKSN18}. For non-reversible systems, we typically obtain complex eigenvalues.

\subsubsection{Time-inhomogeneous systems}

For time-inhomogeneous systems, the transition density function and thus the operators defined above explicitly depend on the starting time~$ t $. Such systems are in general not reversible~\cite{KWNS18}. To simplify the notation, we will omit the explicit time-dependence and write again, e.g., $ \mathcal{P}_\tau $ instead of $ \mathcal{P}_{t, \tau} $.

\begin{definition}[Forward-backward operator]
Let $ \mathds{1}_\mathbb{X} $ denote the indicator function on $ \mathbb{X} $ and define $ \nu = \mathcal{P}_\tau \mathds{1}_\mathbb{X} $. The \emph{forward-backward operator} $ \mathcal{F}_\tau $ is given by
\begin{equation*}
    \mathcal{F}_\tau f(x) = \intop_{\mathbb{X}} p_\tau(x, y) \frac{1}{\nu(y)} \intop_{\mathbb{X}} p_\tau(z, y) \ts f(z) \ts \mathrm{d}z \ts \mathrm{d}y.
\end{equation*}
\end{definition}

Applying clustering techniques to the dominant eigenfunctions of the operator $ \mathcal{F}_\tau $, we obtain coherent sets, see \cite{Froyland13, BaKo17, KHMN19}.

\subsubsection{Data-driven transfer operator approximation}

Popular data-driven approaches for the approximation of transfer operators include \emph{extended dynamic mode decomposition} (EDMD)~\cite{WKR15, KKS16} and its various extensions \cite{WKR15, LDBK17, KSM19}. Assume that we have data of the form $ \{ (x^{(i)}, y^{(i)}) \}_{i=1}^m $, where $ y^{(i)} = \Theta^\tau(x^{(i)}) $ and $ \Theta^\tau $ is the flow map associated with the dynamical system. In addition to the training data, EDMD requires a vector-valued function $ \phi \colon \R^n \to \R^N $, with $ \phi(x) = [\phi_1(x), \dots, \phi_N(x)]^\top $, that maps the data into a typically higher-dimensional feature space.\!\footnote{The functions $ \phi_i $ could, for instance, be monomials, radial basis functions, or indicator functions.} Given the transformed data matrices $ \Phi_x, \Phi_y \in \R^{N \times m} $, defined by
\begin{equation*}
    \Phi_x = \begin{bmatrix} \phi(x^{(1)}) & \phi(x^{(2)}) & \dots & \phi(x^{(m)}) \end{bmatrix}
    \quad \text{and} \quad
    \Phi_y = \begin{bmatrix} \phi(y^{(1)}) & \phi(y^{(2)}) & \dots & \phi(y^{(m)}) \end{bmatrix},
\end{equation*}
we can compute empirical estimates of matrix representations of transfer operators projected onto the space spanned by $ \phi $: The approximated matrix representations of the Koopman operator $ \mathcal{K}_\tau $ and the Perron--Frobenius operator $ \mathcal{P}_\tau $ are given by
\begin{equation*}
    \widehat{K}_\tau^{(m)} = C_{xx}^+ \ts C_{xy}
    \quad \text{and} \quad
    \widehat{P}_\tau^{(m)} = C_{xx}^+ \ts C_{yx},
\end{equation*}
where $ C_{xx} = \frac{1}{m} \Phi_x \ts \Phi_x^\top $ and $ C_{xy} = C_{yx}^\top = \frac{1}{m} \Phi_x \ts \Phi_y^\top $ and $ ^+ $ denotes the pseudoinverse. Analogously, we can approximate the forward-backward operator by
\begin{equation*}
    \widehat{F}_\tau^{(m)} = C_{xx}^+ \ts C_{xy} \ts C_{yy}^+ \ts C_{yx}
\end{equation*}
as shown in \cite{KHMN19}. The pseudoinverses $ C_{xx}^+ $ and $ C_{yy}^+ $ could also be replaced by regularized inverses of the form $ (C_{xx} + \varepsilon \ts I)^{-1} $ and $ (C_{yy} + \varepsilon \ts I)^{-1} $, respectively, where $ \varepsilon $ is a regularization parameter. The representation of the forward-backward operator is closely related to \emph{canonical correlation analysis} (CCA) \cite{Hotelling36, MRB01}, which aims at maximizing the correlation between multidimensional random variables, see \cite{KHMN19} for details.

\begin{remark}
For the approximation of the Perron--Frobenius operator, we need to assume that $ x^{(i)} $ is sampled from the uniform distribution. If we use one long equilibrated trajectory instead, then $ x^{(i)} \sim \pi $ and we obtain $ \widehat{T}_\tau^{(m)} = C_{xx}^+ \ts C_{yx} $, i.e., an approximation of the reweighted operator $ \mathcal{T}_\tau $, cf.~\cite{KNKWKSN18}.
\end{remark}

\subsection{Graph theory}

We assume the reader to be familiar with graph-theoretical concepts and will thus only briefly define directed and undirected graphs as well as associated matrix representations, a more detailed introduction can be found, e.g., in \cite{CLRS09, Rigo16, LaSch22}.

\subsubsection{Basic graph properties}

A \emph{directed graph} $ \mc{G} = (\mc{V}, \mc{E}) $ is given by a set of vertices $ \mc{V} = \{ \mc[1]{v}, \dots, \mc[n]{v} \} $ and a set of edges $ \mc{E} \subseteq \mc{V} \times \mc{V} $. An \emph{undirected graph} can then be regarded as a special case, where the edges have no directionality.

\begin{definition}[Weighted adjacency matrix]
The \emph{weighted adjacency matrix} $ A = (a_{ij})_{i,j=1}^n $ associated with a graph $ \mc{G} $ is defined by
\begin{equation*}
    a_{ij} =
    \begin{cases}
        w(\mc[i]{v}, \mc[j]{v}), & \text{if } (\mc[i]{v}, \mc[j]{v}) \in \mc{E}, \\
        0, & \text{otherwise},
    \end{cases}
\end{equation*}
where $ w \colon \mc{V} \times \mc{V} \to \R_{\ge 0} $ is a function that determines the weight of the edge $ (\mc[i]{v}, \mc[j]{v}) $.
\end{definition}

Furthermore, we define the \emph{out-degree} $ \mathscr{o}(\mc[i]{v}) $ of a vertex $ \mc[i]{v} $ and the degree matrix $ D_\mathscr{o} $ by
\begin{equation*}
    \mathscr{o}(\mc[i]{v}) = \sum_{j=1}^n a_{ij}
    \quad \text{and} \quad
    D_\mathscr{o} = \diag\big(\mathscr{o}(\mc[1]{v}), \dots, \mathscr{o}(\mc[n]{v})\big).
\end{equation*}

In many applications, the network structure is changing in time. We will consider time-evolving graphs---also called temporal graphs, dynamic graphs, or time-varying graphs---where the number of vertices is fixed, but the number of edges can change. Different approaches have been developed to study dynamics of network change. Aggregation-based methods merge all temporal information into one graph \cite{OSHSLKKB07}, focusing on the global network structure and neglecting information on smaller scales. More temporal information is preserved when aggregating networks to, e.g., multilayer networks \cite{BBCGGRSWZ14}. In this paper, we  focus on approaches where each time-slice is analyzed separately and then this information is studied to extract global network change \cite{HS19}. In particular, we assume that a temporal change of a graph at discrete points in time, $ t_1, t_2, \dots $, is formally given by a sequence $(A^{(t_1)}, A^{(t_2)},\dots)$, where  $A^{(t_i)}$ is the weighted adjacency matrix of the graph at time $ t_i $.

\subsubsection{Random walks on graphs}

A time-discrete random walk on a graph $ \mc{G} $ is a discrete stochastic process $ X_t $ that starts in a vertex $ \mc[i]{v} $ and at each time step moves to an adjacent vertex $ \mc[j]{v} $ (or stays in $ \mc[i]{v} $ if self-loops are allowed) with a probability that is proportional to the weight of the edge $ (\mc[i]{v}, \mc[j]{v}) $. Such a random walk on $ \mc{G} $ is defined by the row-stochastic \emph{transition probability matrix}
\begin{equation} \label{eq:RW}
   P = D_\mathscr{o}^{-1} A.
\end{equation}
That is, the entry $ p_{ij} $ of $ P $ is given by $ p_{ij} = \mathbb{P}[X_{t+1} = \mc[j]{v} \mid X_t = \mc[i]{v}\,] $. If a directed graph $ \mc{G} $ is strongly connected and aperiodic,\!\footnote{A graph is called aperiodic if the greatest common divisor of the lengths of its cycles is equal to $ 1 $.} then the random walk process is ergodic and it converges to a unique stationary distribution. If $ \mc{G} $ does not satisfy these properties, typically the random walk process is modified by including a so-called teleportation probability, i.e., a small probability to randomly jump to any vertex in the graph \cite{BrLa98}. If the graph is undirected, the adjacency matrix is symmetric and the matrix $ P $ is similar to a symmetric matrix and its spectrum real-valued. Many variants of Laplacians for directed and undirected graphs have been used in the literature, we will adopt the following definition, cf.~\cite{Luxburg07}.

\begin{definition}[Graph Laplacian]
The \emph{unnormalized graph Laplacian} associated with $ \mc{G} $ is defined by $ L = D_\mathscr{o} - A $ and the \emph{random-walk normalized graph Laplacian} by
\begin{equation*}
    L_\textup{rw} = D_\mathscr{o}^{-1} \ts L = I - P.
\end{equation*}
\end{definition}

Note that if $ \lambda $ is an eigenvalue of $ P $, then $ 1 - \lambda $ is an eigenvalue of $ L_\textup{rw} $. Furthermore, the eigenvectors of the two matrices are identical \cite{MS01}.

\subsubsection{Spectral clustering of undirected graphs}

Several different clustering algorithms based on graph Laplacians have been proposed, we will use the normalized spectral clustering algorithm as defined in \cite{Luxburg07}.

\begin{mdframed}[backgroundcolor=boxback,hidealllines=true]
\vspace*{-1ex}
\begin{textalgorithm}[Spectral clustering algorithm for undirected graphs] \label{alg:undirected} $ $
\begin{enumerate}
\item Assemble the random-walk Laplacian $ L_\textup{rw} = I - P $.
\item Compute the $ k $ smallest eigenvalues $ \lambda_\ell $ and associated eigenvectors $ u_\ell $ of $ L_\textup{rw} $.
\item Define $ U = [u_1, \dots, u_k] \in \R^{n \times k} $ and let $ r_i $ denote the $i$th row of $ U $.
\item Cluster the points $ \{ \ts r_i \ts \}_{i=1}^n $ using, e.g., $ k $-means.
\end{enumerate}
\end{textalgorithm}
\end{mdframed}

We can either use $ L_\textup{rw} $ or $ P $ for spectral clustering. The only difference is that for the former we have to compute the smallest and for the latter the largest eigenvalues. The number of clusters $ k $ is typically chosen in such a way that there exists a spectral gap between $ \lambda_k $ and $ \lambda_{k+1} $ \cite{DjBCS11, DjPhD}. We can also apply other clustering algorithms in step~4. In what follows, we will sometimes use the \emph{sparse eigenbasis approximation} (SEBA) algorithm~\cite{FRS19}, which is advantageous when there is no clear eigengap and was specifically developed for the extraction of metastable and coherent sets.

\section{The forward-backward Laplacian}
\label{sec:The forward-backward Laplacian}

We will now define transfer operators on graphs and highlight relationships with conventional spectral clustering algorithms.

\subsection{Transfer operator perspective}

We consider the case $ \tau = 1 $. That is, given a discrete distribution $ \rho $ on the graph at time $ t $ the Perron--Frobenius operator applied to $ \rho $ yields the distribution at time $ t + 1 $. Correspondingly, the Koopman operator describes the evolution of observables $ f $. From a data-driven perspective, this means that each random walker takes just one step. The state space of the random walkers is given by $ \mathbb{X} = \mc{V} = \{\mc[1]{v}, \dots, \mc[n]{v} \} $ and since $ \tau = 1 $ it holds that $ p_\tau(\mc[i]{v}, \mc[j]{v}) = p_{ij} $. The Perron--Frobenius operator and the Koopman operator defined on a graph $ \mc{G} $ can thus be written as
\begin{equation*}
    \mathcal{P}_\tau \rho(\mc[i]{v}) = \sum_{j=1}^n p_{ji} \ts \rho(\mc[j]{v})
    \quad \text{and} \quad
    \mathcal{K}_\tau f(\mc[i]{v}) = \sum_{j=1}^n p_{ij} \ts f(\mc[j]{v}),
\end{equation*}
respectively. Analogously, the forward-backward operator $ \mathcal{F}_\tau $ can be expressed as
\begin{equation*}
    \mathcal{F}_\tau f(\mc[i]{v}) = \sum_{j=1}^n p_{ij} \frac{1}{\nu(\mc[j]{v})} \sum_{k=1}^n p_{kj} \ts f(\mc[k]{v}),
    \quad \text{with} \quad
    \nu(\mc[j]{v}) = \sum_{\ell=1}^n p_{\ell j}.
\end{equation*}
With a slight abuse of notation, we define vectors $ \rho, f \in \R^n $ with entries $ \rho_i = \rho(\mc[i]{v}) $ and $ f_i = f(\mc[i]{v}) $ so that we can write
\begin{equation*}
    \mathcal{P}_\tau \rho = P^\top \rho
    \quad \text{and} \quad
    \mathcal{K}_\tau f = P f,
\end{equation*}
where the operators are applied component-wise. The matrix representation of the operator $ \mathcal{F}_\tau $ is given by
\begin{equation*}
    \mathcal{F}_\tau f = P D_\nu^{-1} P^\top f \stackrel{\eqref{eq:RW}}{=} D_\mathscr{o}^{-1} A D_\nu^{-1} A^\top D_\mathscr{o}^{-1} f =: Q \ts f,
\end{equation*}
where $ D_\nu = \diag\big(\nu(\mc[1]{v}), \dots, \nu(\mc[n]{v})\big) $. To ensure that the inverse of $ D_\nu $ exists, the probability that a random walker (starting in any vertex) ends up in $ \mc[j]{v} $ after one step must be nonzero, i.e., there must be at least one incoming edge. We account for this by including self-loops with a small weight to each vertex, which can be regarded as a form of regularization. Another possible approach would be to introduce teleportation probabilities, as described in Section \ref{sec:Transfer operators and graphs}. 

\begin{lemma}
It holds that $ Q = P D_\nu^{-1} P^\top $ is a doubly stochastic matrix.
\end{lemma}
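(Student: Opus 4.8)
The plan is to verify the three defining properties of a doubly stochastic matrix directly: entrywise nonnegativity, unit row sums, and unit column sums. The crucial preliminary observation is that $Q$ is symmetric, since $Q^\top = (P \ts D_\nu^{-1} \ts P^\top)^\top = P \ts D_\nu^{-1} \ts P^\top = Q$, using that the diagonal matrix $D_\nu^{-1}$ equals its own transpose. Consequently, once the row sums are shown to be one, the column sums follow immediately, and only two of the three properties require independent verification.

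Nonnegativity is immediate: $P = D_\mathscr{o}^{-1} A$ has nonnegative entries because $A$ has nonnegative weights and $D_\mathscr{o}^{-1}$ is a nonnegative diagonal matrix, while $D_\nu^{-1}$ is a positive diagonal matrix by the regularization that guarantees $\nu(\mc[j]{v}) > 0$ for every $j$. A product of nonnegative matrices is nonnegative, so $Q$ has nonnegative entries.

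For the row sums I would introduce the all-ones vector $\mathds{1} \in \R^n$ and evaluate $Q \ts \mathds{1}$ from the inside out. First, $(P^\top \mathds{1})_j = \sum_{\ell=1}^n p_{\ell j} = \nu(\mc[j]{v})$, so that $P^\top \mathds{1} = \nu$, where $\nu = (\nu(\mc[1]{v}), \dots, \nu(\mc[n]{v}))^\top$. Since $D_\nu = \diag(\nu)$, this yields $D_\nu^{-1} P^\top \mathds{1} = D_\nu^{-1} \nu = \mathds{1}$. Finally, row-stochasticity of $P$ gives $P \ts \mathds{1} = \mathds{1}$, whence $Q \ts \mathds{1} = P \ts D_\nu^{-1} \ts P^\top \mathds{1} = P \ts \mathds{1} = \mathds{1}$. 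The column sums are then handled by symmetry: $\mathds{1}^\top Q = (Q^\top \mathds{1})^\top = (Q \ts \mathds{1})^\top = \mathds{1}^\top$.

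There is no real obstacle here. The only point requiring attention is the telescoping cancellation $D_\nu^{-1}(P^\top \mathds{1}) = \mathds{1}$, which is precisely what makes the definition of $\nu$ as the column sums of $P$ (equivalently $\nu = P^\top \mathds{1}$) the correct normalization, together with the recognition that the symmetry of $Q$ lets the column-sum condition piggyback on the row-sum condition rather than being checked separately.
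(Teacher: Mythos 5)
Your proof is correct and follows essentially the same route as the paper's: the paper notes that $P$ and $D_\nu^{-1} P^\top$ are row-stochastic (hence so is their product) and that $Q$ is symmetric, which is exactly what your computation $Q\ts\mathds{1} = P\ts D_\nu^{-1}P^\top\mathds{1} = \mathds{1}$ together with the symmetry argument spells out in detail. Your version merely makes explicit, via the all-ones vector, the normalization $D_\nu^{-1}P^\top\mathds{1} = \mathds{1}$ and the nonnegativity that the paper leaves implicit in the word ``stochastic.''
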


\begin{proof}
First, note that $ P $ and $ D_\nu^{-1} P^\top $ are row-stochastic matrices and thus their product. Furthermore, $ Q $ is symmetric.
\end{proof}

In order to determine eigenfunctions of the operators introduced above, we can thus simply compute eigenvectors of the corresponding matrix representations. This illustrates that the conventional  spectral clustering for undirected graphs is based on the eigenfunctions of the Koopman operator. For directed and time-evolving graphs, we will now extend this idea and propose a clustering algorithm that utilizes eigenfunctions of the forward-backward operator.

\begin{definition}[Forward-backward Laplacian]
We call the matrix
\begin{equation*}
    L_\textup{fb} = I - Q
\end{equation*}
associated with the graph $ \mc{G} $ the \emph{forward-backward Laplacian}.
\end{definition}

That is, we define the forward-backward Laplacian based on the generalized operator $ \mathcal{F}_\tau $ by mirroring the definition of the random-walk Laplacian as a shifted matrix representation of the Koopman operator $ \mathcal{K}_\tau $.

\subsection{Random walker perspective}

We derived the forward-backward Laplacian by applying the definition of transfer operators to graphs. Alternatively, we can interpret these results in terms of random walks again. We define $ \phi(x) = [\phi_1(x), \dots, \phi_n(x)]^\top $, with
\begin{equation*}
    \phi_i(x) =
    \begin{cases}
        1, & x = \mc[i]{v}, \\
        0, & \text{otherwise}.
    \end{cases}
\end{equation*}
That is, the basis function $ \phi_i(x) $ is the indicator function for vertex $ \mc[i]{v} $ and the random walk data is simply represented in the one-hot encoding format. Furthermore, the feature space dimension is $ N = n $.

\begin{proposition} \label{prop:Convergence}
Assume $ x^{(i)} \sim U(\{1, \dots, n\}) $, where $ U $ denotes the uniform distribution. Using a basis comprising indicator functions, we obtain
\begin{align*}
    & \lim\limits_{m \to \infty}{\widehat{K}_\tau^{(m)}} = P,
    && \lim\limits_{m \to \infty}{\widehat{P}_\tau^{(m)}} = P^\top,
    && \lim\limits_{m \to \infty}{\widehat{F}_\tau^{(m)}} = Q.
\end{align*}
\end{proposition}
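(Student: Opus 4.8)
The plan is to exploit the special structure that the indicator basis imposes on the feature matrices $\Phi_x$ and $\Phi_y$: since $\phi$ is the one-hot encoding, each column $\phi(x^{(i)})$ equals a standard basis vector $e_k$ whenever $x^{(i)} = \mc[k]{v}$. Consequently every empirical covariance matrix reduces to a normalized count (histogram) matrix, and the three claimed limits follow from the strong law of large numbers together with a continuity argument for the pseudoinverse.

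First I would compute the building blocks. Writing $m_k$ for the number of samples with $x^{(i)} = \mc[k]{v}$, the product $\phi(x^{(i)}) \ts \phi(x^{(i)})^\top = e_k \ts e_k^\top$ contributes to a single diagonal entry, so $C_{xx} = \frac{1}{m} \Phi_x \ts \Phi_x^\top$ is the diagonal matrix with entries $m_k / m$. Similarly, $(C_{xy})_{k\ell}$ is the fraction of pairs with $x^{(i)} = \mc[k]{v}$ and $y^{(i)} = \mc[\ell]{v}$, and $C_{yy}$ is diagonal with entries equal to the fraction of samples landing in $\mc[\ell]{v}$. Because $x^{(i)} \sim U(\{1,\dots,n\})$ and $y^{(i)}$ is one random-walk step from $x^{(i)}$, the relevant probabilities are $\mathbb{P}[x = \mc[k]{v}] = 1/n$, $\mathbb{P}[x = \mc[k]{v},\, y = \mc[\ell]{v}] = p_{k\ell}/n$, and $\mathbb{P}[y = \mc[\ell]{v}] = \nu(\mc[\ell]{v})/n$. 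The strong law of large numbers then gives the entrywise almost-sure limits $C_{xx} \to \frac{1}{n} I$, $C_{xy} \to \frac{1}{n} P$, $C_{yx} \to \frac{1}{n} P^\top$, and $C_{yy} \to \frac{1}{n} D_\nu$.

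The step requiring the most care is passing the limit through the pseudoinverse, since $A \mapsto A^+$ is not continuous in general. The resolution is that the limit matrices $\frac{1}{n} I$ and $\frac{1}{n} D_\nu$ are diagonal and invertible---the latter thanks to the self-loop regularization guaranteeing $\nu(\mc[j]{v}) > 0$ for all $j$. Since $C_{xx}$ and $C_{yy}$ are themselves diagonal with nonnegative entries, and each diagonal entry converges almost surely to a strictly positive value, for all sufficiently large $m$ these matrices have full rank, whence $C_{xx}^+ = C_{xx}^{-1}$ and $C_{yy}^+ = C_{yy}^{-1}$. Continuity of inversion on the invertible diagonal matrices then yields $C_{xx}^+ \to n \ts I$ and $C_{yy}^+ \to n \ts D_\nu^{-1}$.

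Finally I would combine the pieces. Matrix multiplication is continuous, so
\begin{align*}
    \widehat{K}_\tau^{(m)} = C_{xx}^+ \ts C_{xy} &\to (n \ts I)\Big(\tfrac{1}{n} P\Big) = P, \\
    \widehat{P}_\tau^{(m)} = C_{xx}^+ \ts C_{yx} &\to (n \ts I)\Big(\tfrac{1}{n} P^\top\Big) = P^\top, \\
    \widehat{F}_\tau^{(m)} = C_{xx}^+ \ts C_{xy} \ts C_{yy}^+ \ts C_{yx} &\to (n \ts I)\Big(\tfrac{1}{n} P\Big)(n \ts D_\nu^{-1})\Big(\tfrac{1}{n} P^\top\Big) = P \ts D_\nu^{-1} P^\top = Q,
\end{align*}
which is exactly the claim. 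The only genuine subtlety is the pseudoinverse continuity; everything else is the law of large numbers applied entrywise.
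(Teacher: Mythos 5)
Your proof is correct and follows essentially the same route as the paper's: both reduce the covariance matrices under the indicator basis to normalized count (histogram) matrices and pass to the limit entrywise via the law of large numbers, yielding $C_{xx} \to \frac{1}{n} I$, $C_{xy} \to \frac{1}{n} P$, and $C_{yy} \to \frac{1}{n} D_\nu$. The only difference is that you spell out the invertibility and continuity argument justifying the limit of the pseudoinverses, a point the paper handles implicitly by simply assuming $D_\nu$ is invertible (via the self-loop regularization).
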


The proof is an application of EDMD and CCA convergence results to discrete Markov chains, where the dictionary is now given by indicator functions. For the sake of completeness, it is included in the appendix.

\begin{remark}
We can also approximate the Koopman operator associated with a time-homogeneous system with invariant density $ \pi $ using one long equilibrated trajectory. Furthermore, in this case $ \widehat{T}_\tau^{(m)} = C_{xx}^+ \ts C_{yx} $ converges to the Perron--Frobenius operator with respect to the equilibrium density.
\end{remark}

This shows that we can estimate metastable and coherent sets from data and also allows us to interpret the spectral clustering methods for directed and time-evolving graphs in terms of random walkers.

\section{Spectral clustering of directed and time-evolving graphs}
\label{sec:Spectral clustering}

We will now illustrate how the forward-backward Laplacian $ L_\textup{fb} $ can be used for spectral clustering.

\subsection{Spectral clustering of directed graphs}

We have seen that the detection of clusters in undirected graphs is related to the computation of metastable sets. In order to detect clusters in directed graphs, we compute coherent sets.

\begin{mdframed}[backgroundcolor=boxback,hidealllines=true]
\vspace*{-1ex}
\begin{textalgorithm}[Spectral clustering algorithm for directed graphs] \label{alg:directed} $ $
\begin{enumerate}
\item Assemble the forward-backward Laplacian $ L_\textup{fb} = I - Q $.
\item Compute the $ k $ smallest eigenvalues $ \lambda_\ell $ and associated eigenvectors $ u_\ell $ of $ L_\textup{fb} $.
\item Define $ U = [u_1, \dots, u_k] \in \R^{n \times k} $ and let $ r_i $ denote the $i$th row of $ U $.
\item Cluster the points $ \{ \ts r_i \ts \}_{i=1}^n $ using, e.g., $ k $-means.
\end{enumerate}
\end{textalgorithm}
\end{mdframed}

As before, instead of computing the $ k $ smallest eigenvalues of $ L_\textup{fb} $, we can determine the $ k $ largest eigenvalues and associated eigenvectors of $ Q $. Alternatively, we can compute the first $ k $ right singular vectors of the matrix $ D_\nu^{-\nicefrac{1}{2}} P^\top = D_\nu^{-\nicefrac{1}{2}} A^\top D_\mathscr{o}^{-1} $. This shows that spectral clustering for directed graphs can again be regarded as a spectral decomposition of an appropriately normalized adjacency matrix.

\begin{example} \label{ex:Guiding example}

\begin{figure}
    \centering
    \begin{minipage}[t]{0.25\textwidth}
        \centering
        \subfiguretitle{(a)}
        \vspace*{1ex}
        \resizebox{0.85\textwidth}{!}{%
        \begin{tikzpicture}[
                >= stealth, 
                semithick 
            ]
            \tikzstyle{every state}=[
                draw = black,
                thick,
                fill = white,
                inner sep=0pt,
                text width=6mm,
                align=center,
                scale=0.6
            ]
            
            \node[state] (v3) {3};
            \node[state] (v4) [right=0.8cm of v3] {4};
            \node[state] (v1) [below=0.8cm of v4] {1};
            \node[state] (v2) [below=0.8cm of v3] {2};
            
            \node[state] (v8) [right=1.35cm of v4] {8};
            \node[state] (v5) [above left=0.55cm and 0.55cm of v8] {5};
            \node[state] (v6) [above=1.35cm of v8] {6};
            \node[state] (v7) [right=1.3cm of v5] {7};
            
            \node[state] (v9) [right=1.35cm of v1] {9};
            \node[state] (v10) [below right=0.55cm and 0.55cm of v9] {10};
            \node[state] (v11) [below=1.35cm of v9] {11};
            \node[state] (v12) [left=1.35cm of v10] {12};
            
            \path[->] (v1) edge node {} (v2);
            \path[->] (v2) edge node {} (v3);
            \path[->] (v3) edge node {} (v4);
            \path[->] (v4) edge node {} (v1);
            
            \path[->] (v5.45) edge node {} (v6.225);
            \path[->] (v6.-45) edge node {} (v7.135);
            \path[->] (v7.-135) edge node {} (v8.45);
            \path[->] (v8.135) edge node {} (v5.-45);
            
            \path[->] (v9.-45) edge node {} (v10.135);
            \path[->] (v10.-135) edge node {} (v11.45);
            \path[->] (v11.135) edge node {} (v12.-45);
            \path[->] (v12.45) edge node {} (v9.225);
            
            \path[->, dashed] (v4) edge node [dotted] {} (v5);
            \path[->, dashed] (v8) edge node [dotted] {} (v9);
            \path[->, dashed] (v12) edge node [dotted] {} (v1);
        \end{tikzpicture}}
    \end{minipage}
    \begin{minipage}[t]{0.268\textwidth}
        \centering
        \subfiguretitle{(b)}
        \vspace*{1ex}
        \includegraphics[width=\linewidth]{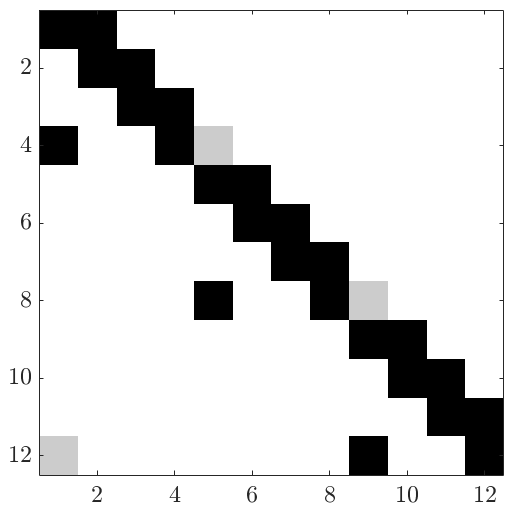}
    \end{minipage}
    \begin{minipage}[t]{0.4\textwidth}
        \centering
        \subfiguretitle{(c)}
        \vspace*{1ex}
        \includegraphics[width=\linewidth]{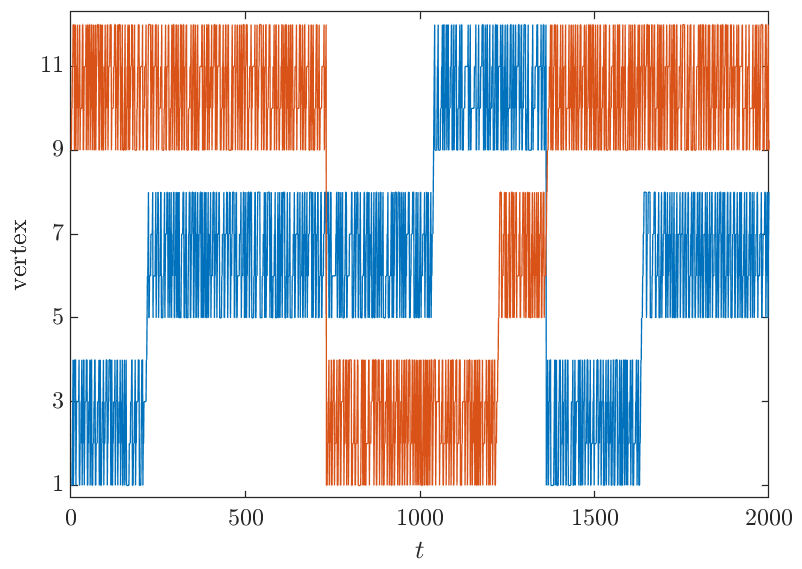}
    \end{minipage} \\[0.5ex]
    \begin{minipage}[t]{0.302\textwidth}
        \centering
        \subfiguretitle{(d)}
        \vspace*{1ex}
        \includegraphics[width=\linewidth]{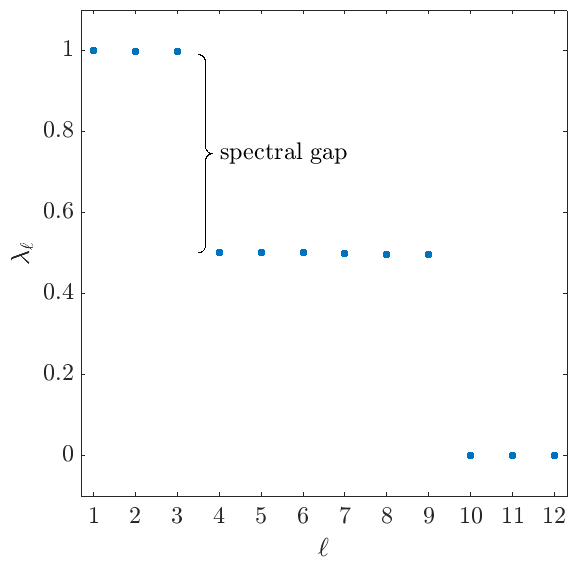}
    \end{minipage}
    \begin{minipage}[t]{0.4\textwidth}
        \centering
        \subfiguretitle{(e)}
        \vspace*{1ex}
        \includegraphics[width=\linewidth]{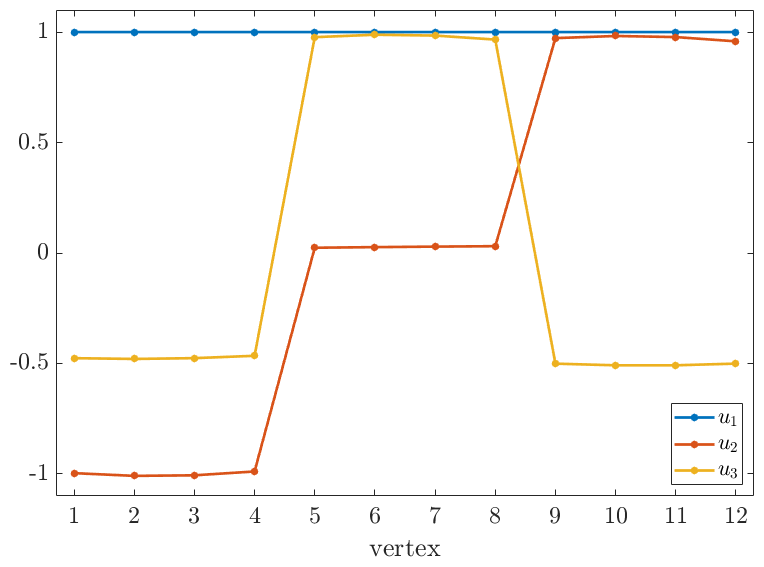}
    \end{minipage}
    \begin{minipage}[t]{0.25\textwidth}
        \centering
        \subfiguretitle{(f)}
        \vspace*{1ex}
        \resizebox{0.85\textwidth}{!}{%
        \begin{tikzpicture}[
                >= stealth, 
                semithick 
            ]
            \tikzstyle{every state}=[
                draw = black,
                thick,
                fill = white,
                inner sep=0pt,
                text width=6mm,
                align=center,
                scale=0.6
            ]
            
            \node[state,fill=red!60] (v3) {3};
            \node[state,fill=red!60] (v4) [right=0.8cm of v3] {4};
            \node[state,fill=red!60] (v1) [below=0.8cm of v4] {1};
            \node[state,fill=red!60] (v2) [below=0.8cm of v3] {2};
            
            \node[state,fill=green!60] (v8) [right=1.35cm of v4] {8};
            \node[state,fill=green!60] (v5) [above left=0.55cm and 0.55cm of v8] {5};
            \node[state,fill=green!60] (v6) [above=1.35cm of v8] {6};
            \node[state,fill=green!60] (v7) [right=1.3cm of v5] {7};
            
            \node[state,fill=yellow!60] (v9) [right=1.35cm of v1] {9};
            \node[state,fill=yellow!60] (v10) [below right=0.55cm and 0.55cm of v9] {10};
            \node[state,fill=yellow!60] (v11) [below=1.35cm of v9] {11};
            \node[state,fill=yellow!60] (v12) [left=1.35cm of v10] {12};
            
            \path[->] (v1) edge node {} (v2);
            \path[->] (v2) edge node {} (v3);
            \path[->] (v3) edge node {} (v4);
            \path[->] (v4) edge node {} (v1);
            
            \path[->] (v5.45) edge node {} (v6.225);
            \path[->] (v6.-45) edge node {} (v7.135);
            \path[->] (v7.-135) edge node {} (v8.45);
            \path[->] (v8.135) edge node {} (v5.-45);
            
            \path[->] (v9.-45) edge node {} (v10.135);
            \path[->] (v10.-135) edge node {} (v11.45);
            \path[->] (v11.135) edge node {} (v12.-45);
            \path[->] (v12.45) edge node {} (v9.225);
            
            \path[->, dashed] (v4) edge node [dotted] {} (v5);
            \path[->, dashed] (v8) edge node [dotted] {} (v9);
            \path[->, dashed] (v12) edge node [dotted] {} (v1);
        \end{tikzpicture}}
    \end{minipage}
    \caption{(a) Directed graph with three clusters. The weight of each solid edge is 1 and the weight of each dashed edge 0.01. Self-loops are omitted. (b) Corresponding asymmetric adjacency matrix. (c) Two random walks  of length 2000 starting in different vertices. (d)~Eigenvalues of the matrix $ Q $. Three eigenvalues are close to 1, which implies that there are three coherent sets. (e)~Eigenvectors corresponding to the dominant eigenvalues. By applying $ k $-means, SEBA, or other clustering techniques to the eigenvectors, we can extract the coherent sets. (f) Resulting clustering of the graph.}
    \label{fig:Guiding example}
\end{figure}

Consider the directed graph shown in Figure~\ref{fig:Guiding example}. The graph comprises three unidirectionally connected clusters and a random walker will typically spend a long time in one cluster before moving to the next one. Although this behavior is highly similar to the undirected case, the eigenvalues and eigenvectors of $ L_\textup{rw} $ are complex-valued and standard spectral clustering techniques fail. We instead compute the eigenvalues and eigenvectors of the forward-backward Laplacian $ L_\textup{fb} $ and then apply clustering techniques to the dominant eigenvectors. Due to the symmetry, we obtain repeated eigenvalues and the corresponding eigenvectors are only determined up to basis rotations. Nevertheless, it can be seen that the values of the eigenvectors are almost constant within the clusters. This indicates a crisp clustering. \exampleSymbol
\end{example}

In order to avoid having to deal with complex-valued eigenvalues and eigenvectors, one might be tempted to consider only the real parts (or a combination of real and imaginary parts). This will, however, in general not lead to satisfactory results. Even for the simple graph introduced in Example~\ref{ex:Guiding example}, we would not obtain the three clusters shown in Figure~\ref{fig:Guiding example}. It is important to note here that the self-loops are crucial and improve the clustering results. Without the self-loops, a random walker starting in vertex 1 that moves forward and then backward will end up in vertex 1 with probability one. This leads to clusters of size one. Adding self-loops hence regularizes the problem and leads to more balanced clusters. Unless noted otherwise, we will always add self-loops with edge weights $ w(\mc[i]{v}, \mc[i]{v}) = 1 $ to all vertices. Let us now apply Algorithm~\ref{alg:directed} to a larger graph that does not have such a clearly defined cluster structure.

\begin{example}

\begin{figure}[htb]
    \centering
    \begin{minipage}[t]{0.48\textwidth}
        \centering
        \subfiguretitle{(a)}
        \includegraphics[width=0.85\textwidth]{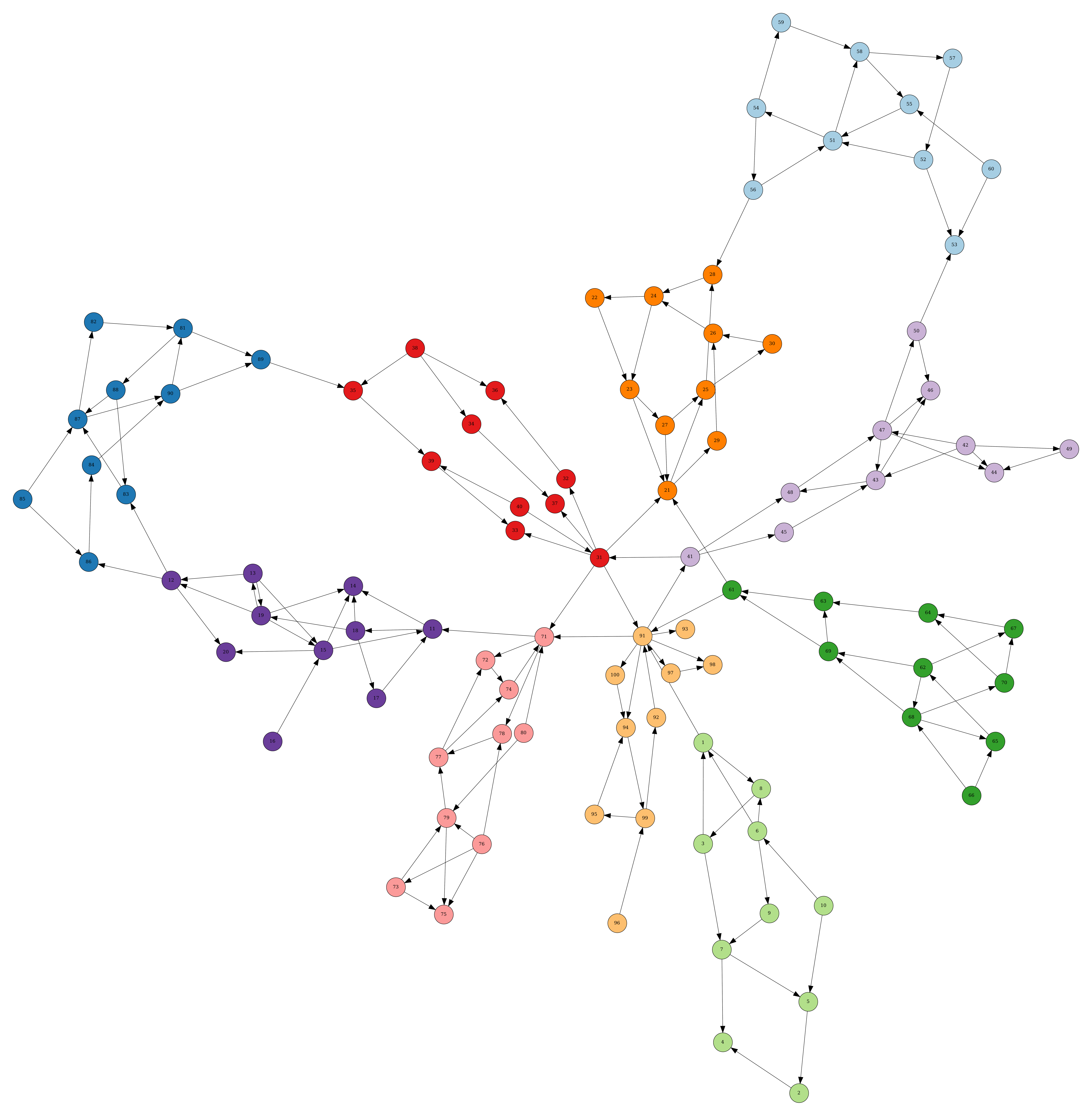}
    \end{minipage}
    \begin{minipage}[t]{0.48\textwidth}
        \centering
        \subfiguretitle{(b)}
        \vspace*{3ex}
        \includegraphics[width=0.85\textwidth]{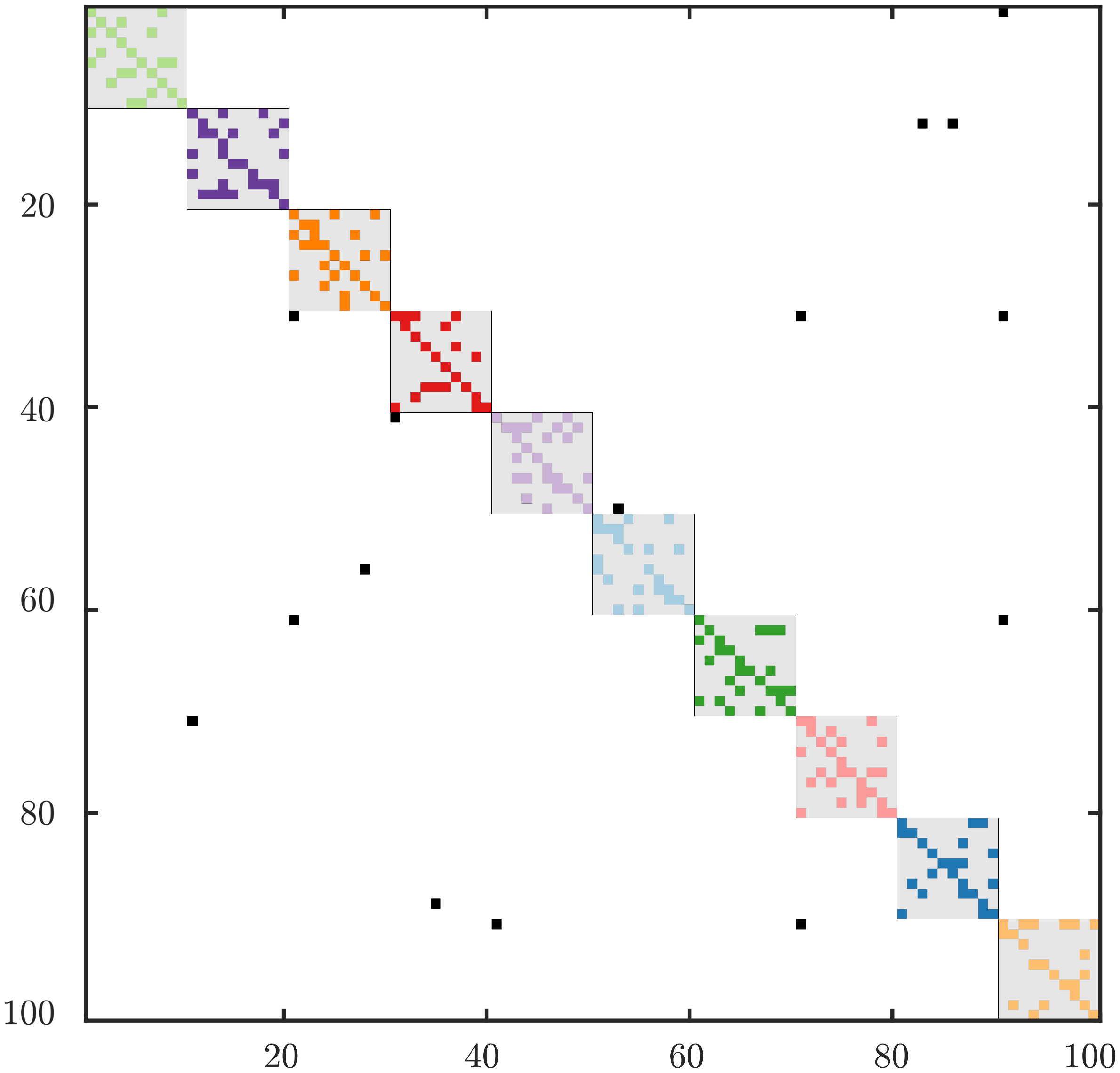}
    \end{minipage}
    \caption{(a) Randomly generated directed graph with 100 vertices consisting of 10 sparsely connected graphs with 10 vertices. We apply $ k $-means with $ k = 10 $ to the dominant eigenvectors of $ L_\textup{fb} $. The resulting clusters are represented in different colors. (b)~Adjacency matrix of the graph, where the clusters are marked in the corresponding colors.}
    \label{fig:Ten-cluster graph}
\end{figure}

We generate a directed graph comprising 100 vertices by sparsely connecting 10 randomly generated sparse matrices of size 10. The clustered graph and its adjacency matrix are shown in Figure~\ref{fig:Ten-cluster graph}. Algorithm~\ref{alg:directed} splits the graph into the 10 clusters associated with the 10 randomly generated matrices of size 10. \exampleSymbol
\end{example}

This shows that the proposed spectral clustering algorithm for directed graphs successfully identifies groups of vertices that share similar properties. In particular, reversing the direction of certain edges may also change the cluster assignments, which illustrates that the directionality of the edges is taken into account. The proposed spectral clustering algorithm requires only matrix-matrix multiplications, inverses of diagonal matrices, and methods to compute spectral properties of the resulting forward-backward Laplacian. Using state-of-the-art numerical linear algebra libraries containing, for instance, iterative Arnoldi-type methods for the computation of dominant eigenvalues and eigenvectors of high-dimensional sparse matrices, the algorithm can be easily applied to large-scale problems. 

\begin{example}

\begin{figure}[t]
    \centering
    \begin{minipage}[t]{0.35\textwidth}
        \centering
        \subfiguretitle{(a)}
        \vspace*{1.5ex}
        \includegraphics[width=0.95\linewidth]{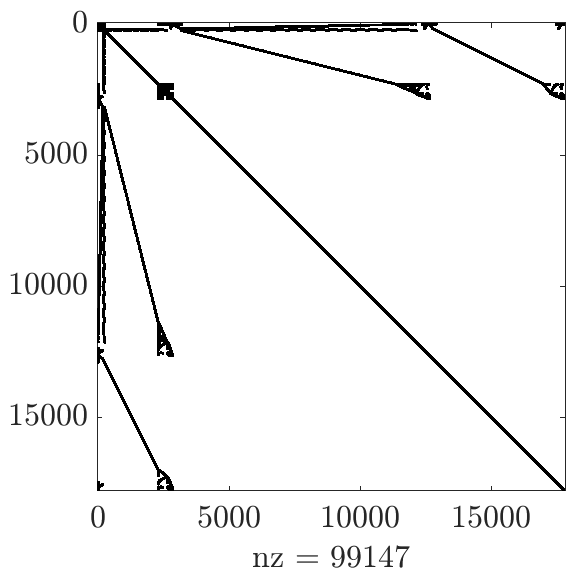}
    \end{minipage}
    \begin{minipage}[t]{0.62\textwidth}
        \centering
        \subfiguretitle{(b)}
        \includegraphics[width=\linewidth]{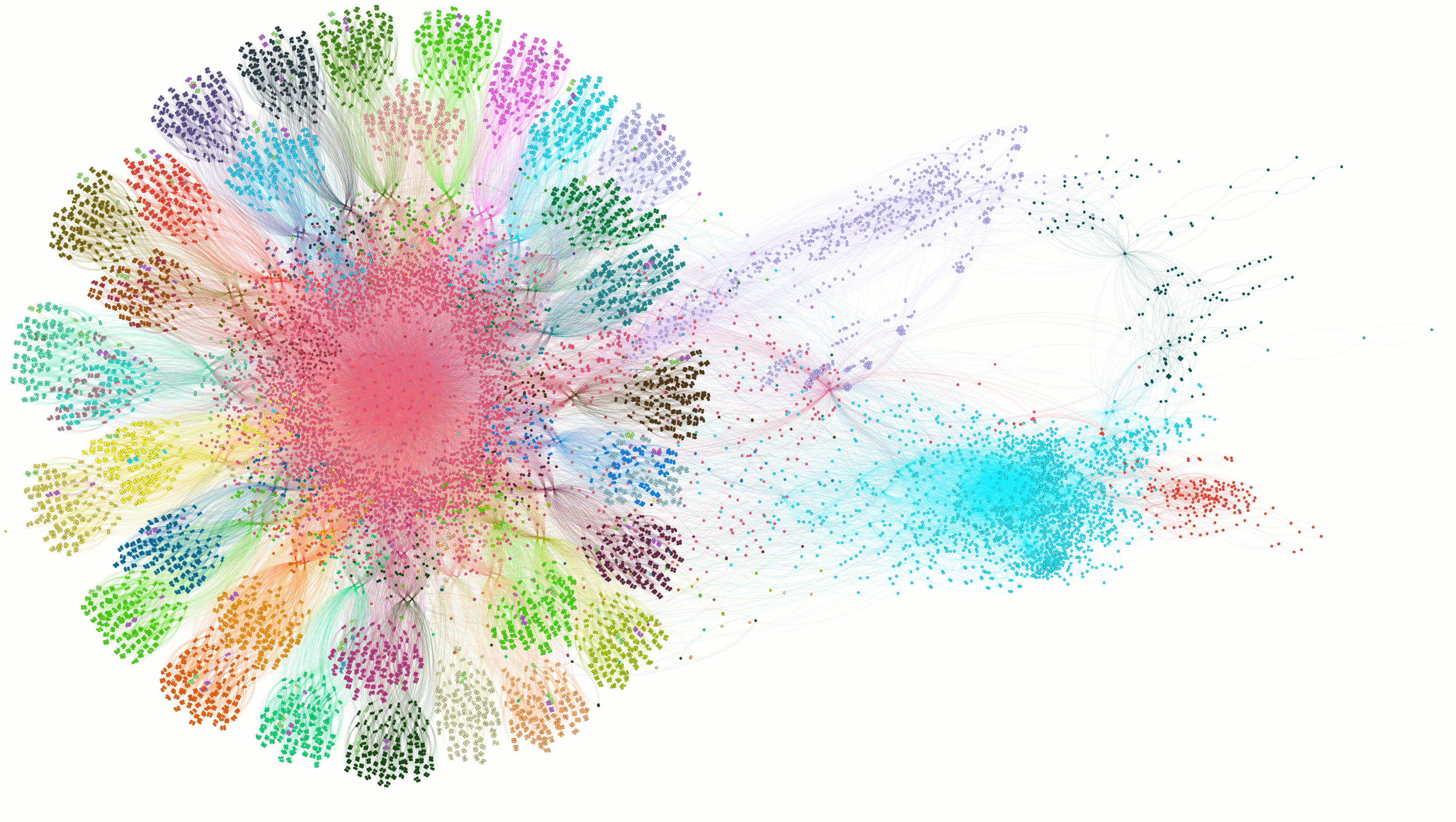}
    \end{minipage}
    \caption{(a) Adjacency matrix of the memory circuit. (b) Spectral clustering of the graph into 50 clusters. In addition to the pink cluster in the middle, which contains approximately 18 \% of the vertices, and the light-blue cluster, which contains approximately 9 \%, we obtain many regular-looking clusters surrounding the center cluster.}
    \label{fig:Matrix market}
\end{figure}

We cluster a directed graph representing a memory circuit. The matrix, which is available on the \href{https://math.nist.gov/MatrixMarket/data/misc/hamm/memplus.html}{Matrix Market} website, is of size $ 17758 \times 17758 $ and contains $ 99147 $ nonzero entries. We apply Algorithm~\ref{alg:directed} and arbitrarily choose $ k = 50 $ since there is no clear spectral gap in this case. Applying the clustering algorithm just takes a couple of seconds on a conventional laptop. The clustering is shown in Figure~\ref{fig:Matrix market}. The results demonstrate the efficacy and scalability of our approach. \exampleSymbol
\end{example}

\subsection{Spectral clustering of time-evolving graphs}

To illustrate the versatility of the forward-backward Laplacian, we will now apply our approach to find coherent sets in time-evolving graphs. In particular, we will consider the following two approaches for estimating the matrix $ Q $: In Approach A, the matrix $ Q $ is obtained from random walk data by computing $ \widehat{F}_\tau^{(m)} = C_{xx}^+ \ts C_{xy} \ts C_{yy}^+ \ts C_{yx} $, see Proposition~\ref{prop:Convergence}. In Approach B, we define $ Q = P D_\nu^{-1} P^\top $ using
\begin{equation*}
    P = \prod_{t=0}^{T} P^{(t)},
\end{equation*}
where the transition matrix $ P^{(t)} $ corresponds to the adjacency matrix of the graph at time~$ t \in \{0, \dots, T \} $. Constructed in this way, the matrix $ P $ contains transition probabilities over time $ T $. We then apply again Algorithm~\ref{alg:directed} to obtain the
clusters. The two approaches differ in that the former requires only random walk data, whereas the latter assumes that the time-evolving network structure is known. In the following examples, we will demonstrate how both approaches can be applied to real-world data. We will use Approach~A in Example \ref{ex:Double-well graph}, mainly to illustrate the notion of coherent sets in time-evolving graphs, and Approach~B in Example \ref{ex:Quadruple-gyre graph} and Example \ref{ex:School network} to show how coherent sets can be found when the structure of a time-evolving graph is known.

\begin{example} \label{ex:Double-well graph}

\begin{figure}
    \centering
    \begin{minipage}[t]{0.235\textwidth}
        \centering
        \subfiguretitle{(a)}
        \vspace*{1ex}
        \resizebox{\textwidth}{!}{%
        \begin{tikzpicture}[
                        >= stealth, 
                        semithick 
                    ]
                    \tikzstyle{every state}=[
                        draw = black,
                        thick,
                        fill = white,
                        inner sep=0pt,
                        text width=6mm,
                        align=center,
                        scale=0.6
                    ]
          \foreach \x in {1,3,...,24}
            \node[state] (\x) at (\x*360/24-15:3) {\x};
          \foreach \x in {2,4,...,24}
            \node[state] (\x) at (\x*360/24-30:5) {\x};
          \foreach \x/\y in {1/2,3/4,5/6,7/8,9/10,11/12,13/14,15/16,17/18,19/20,21/22,23/24, 6/8,8/10,5/7,7/9,18/20,20/22,17/19,19/21}
            \draw [-] (\x) -- (\y);
          \foreach \x/\y in {2/4,4/6,1/3,3/5,2/24,24/22,1/23,23/21,14/12,12/10,13/11,11/9,14/16,16/18,13/15,15/17}
            \draw [-{Stealth[length=3mm, width=2mm]}] (\x) -- (\y);
          
          \node[state,fill=gray!30] () at (5*360/24-15:3) {5};
          \node[state,fill=gray!30] () at (7*360/24-15:3) {7};
          \node[state,fill=gray!30] () at (9*360/24-15:3) {9};
          \node[state,fill=gray!30] () at (6*360/24-30:5) {6};
          \node[state,fill=gray!30] () at (8*360/24-30:5) {8};
          \node[state,fill=gray!30] () at (10*360/24-30:5) {10};
          
          \node[state,fill=gray!30] () at (17*360/24-15:3) {17};
          \node[state,fill=gray!30] () at (19*360/24-15:3) {19};
          \node[state,fill=gray!30] () at (21*360/24-15:3) {21};
          \node[state,fill=gray!30] () at (18*360/24-30:5) {18};
          \node[state,fill=gray!30] () at (20*360/24-30:5) {20};
          \node[state,fill=gray!30] () at (22*360/24-30:5) {22};
          
          \foreach \Point in {(-2.762, 4.657), (2.692, 4.352), (-1.371, 2.638), (1.499, 2.588), (-0.084, 5.104), (-2.125, 4.437), (2.241, 4.088), (-2.403, 4.244), (4.503, 2.569), (-2.823, 1.314), (1.458, 2.446), (-0.197, 5.006), (-1.478, 2.505), (-2.604, 1.607), (-1.922, 2.528), (0.056, 4.900), (-2.494, 1.485), (-1.824, 2.748), (0.091, 3.352), (1.358, 2.693), (2.731, 1.805), (-2.350, 1.833), (-1.421, 2.458), (-3.162, 1.787), (-4.626, 2.293), (-2.517, 4.110), (-0.138, 4.903), (-2.611, 1.341), (2.711, 4.743), (0.070, 4.873), (2.493, 1.326), (2.560, 4.548), (-2.558, 1.572), (-4.235, 2.486), (-1.607, 2.762), (2.797, 1.405), (-0.208, 2.772), (2.642, 4.266), (-4.226, 2.704), (-4.324, 2.074), (4.033, 2.745), (-1.414, 2.764), (2.603, 4.321), (4.454, 2.537), (-0.045, 4.671), (-0.230, 5.316), (-2.519, 3.905), (4.192, 2.378), (0.024, 4.849), (-1.094, 2.735), (-2.369, 4.256), (0.500, 5.055), (-2.416, 4.101), (2.574, 1.307), (1.319, 2.173), (2.213, 1.448), (0.093, 3.270), (-0.234, 3.413), (4.549, 2.583), (4.058, 2.216), (-4.656, 2.573), (-4.367, 2.550), (1.644, 2.589), (0.065, 2.640), (-2.526, 4.506), (-2.726, 4.403), (-2.777, 4.296), (2.156, 4.612), (-2.689, 1.448), (2.364, 1.535), (2.510, 1.503), (-1.442, 2.572), (2.969, 1.336), (-3.997, 2.414), (2.982, 1.545), (-2.513, 1.100), (0.111, 4.938), (0.054, 2.986), (1.425, 2.652), (-0.246, 3.239), (2.878, 1.468), (-2.238, 4.291)}
            \draw[red,fill=red] \Point circle (0.3ex);
            
          \foreach \Point in {(2.734, -4.289), (2.529, -4.306), (-2.623, -0.981), (-1.280, -2.270), (-3.994, -2.675), (4.413, -2.462), (2.536, -4.509), (-1.347, -2.680), (-2.547, -1.460), (2.086, -2.627), (-2.354, -4.317), (-4.419, -2.148), (1.454, -2.462), (4.332, -2.506), (-4.049, -2.276), (0.407, -3.123), (4.371, -2.586), (2.673, -4.185), (2.737, -1.466), (-0.317, -3.310), (-2.651, -2.036), (-0.098, -5.010), (-0.206, -3.066), (-4.521, -2.550), (0.160, -2.990), (1.592, -2.614), (-1.536, -2.763), (2.587, -4.401), (2.617, -4.297), (0.342, -3.354), (-4.605, -2.686), (4.125, -2.495), (-2.843, -1.521), (-1.193, -2.774), (2.804, -4.269), (0.108, -4.820), (-2.635, -4.758), (1.428, -2.525), (-0.113, -5.325), (-0.056, -2.891), (2.533, -4.701), (2.596, -1.638), (-1.249, -2.872), (-2.429, -1.329), (2.456, -4.382), (0.409, -3.019), (4.552, -2.567), (4.099, -2.143), (-1.361, -2.789), (0.080, -2.593), (-2.508, -4.493), (4.649, -2.833), (-3.121, -1.514), (1.504, -2.515), (1.568, -2.829), (-2.384, -4.472), (1.406, -2.315), (-2.719, -4.493), (0.074, -4.835), (1.671, -2.135), (0.243, -4.852), (-0.012, -3.173), (-4.507, -2.653), (-2.554, -4.155), (-0.194, -4.828), (1.991, -2.661), (2.774, -4.225), (-2.511, -1.582), (1.658, -2.804), (2.478, -4.146), (0.057, -3.024), (2.208, -1.264), (-4.032, -2.924), (-2.242, -4.243), (-2.459, -1.367), (0.210, -5.074), (-2.486, -1.536), (4.116, -2.506), (1.160, -2.663), (2.498, -1.256)}
            \draw[green,fill=green] \Point circle (0.3ex);
            
          \foreach \Point in {(-5.263, -0.033), (5.043, -0.004), (-2.903, 0.157), (-2.792, -0.179), (-3.158, -0.039), (-4.781, -0.270), (-2.846, 0.263), (3.056, 0.176), (5.137, 0.204), (3.117, 0.171), (4.656, 0.193), (4.821, 0.141), (4.947, -0.164), (4.744, -0.308), (2.670, 0.021), (5.164, -0.035), (-4.919, 0.106), (-3.103, -0.251), (5.343, -0.055), (2.738, 0.186), (-5.471, 0.019), (5.113, 0.100), (-4.743, 0.308), (-3.055, -0.181), (3.523, -0.277), (3.263, -0.044), (-4.787, 0.029), (-5.120, 0.017), (4.969, 0.051), (-4.901, -0.142), (5.159, 0.148), (-4.921, -0.125), (-5.186, -0.329), (-3.113, -0.403), (5.095, 0.242), (-3.330, 0.139), (-3.365, -0.221), (4.836, -0.208)}
            \draw[yellow,fill=yellow] \Point circle (0.3ex);
        \end{tikzpicture}}
    \end{minipage}
    \hspace*{0.5ex}
    \begin{minipage}[t]{0.23\textwidth}
        \centering
        \subfiguretitle{(b)}
        \vspace*{0.8ex}
        \resizebox{\textwidth}{!}{%
        \begin{tikzpicture}[
                        >= stealth, 
                        semithick 
                    ]
                    \tikzstyle{every state}=[
                        draw = black,
                        thick,
                        fill = white,
                        inner sep=0pt,
                        text width=6mm,
                        align=center,
                        scale=0.6
                    ]
          \foreach \x in {1,3,...,24}
            \node[state] (\x) at (\x*360/24-15:3) {\x};
          \foreach \x in {2,4,...,24}
            \node[state] (\x) at (\x*360/24-30:5) {\x};
          \foreach \x/\y in {1/2,3/4,5/6,7/8,9/10,11/12,13/14,15/16,17/18,19/20,21/22,23/24, 8/10,10/12,7/9,9/11,20/22,22/24,19/21,21/23}
            \draw [-] (\x) -- (\y);
          \foreach \x/\y in {4/6,6/8,3/5,5/7,4/2,2/24,3/1,1/23,16/14,14/12,15/13,13/11,16/18,18/20,15/17,17/19}
            \draw [-{Stealth[length=3mm, width=2mm]}] (\x) -- (\y);
          
          \node[state,fill=gray!30] () at (7*360/24-15:3) {7};
          \node[state,fill=gray!30] () at (9*360/24-15:3) {9};
          \node[state,fill=gray!30] () at (11*360/24-15:3) {11};
          \node[state,fill=gray!30] () at (8*360/24-30:5) {8};
          \node[state,fill=gray!30] () at (10*360/24-30:5) {10};
          \node[state,fill=gray!30] () at (12*360/24-30:5) {12};
          
          \node[state,fill=gray!30] () at (19*360/24-15:3) {19};
          \node[state,fill=gray!30] () at (21*360/24-15:3) {21};
          \node[state,fill=gray!20] () at (23*360/24-15:3) {23};
          \node[state,fill=gray!20] () at (20*360/24-30:5) {20};
          \node[state,fill=gray!20] () at (22*360/24-30:5) {22};
          \node[state,fill=gray!20] () at (24*360/24-30:5) {24};
          
          \foreach \Point in {(-0.031, 3.032), (0.307, 2.985), (-4.208, 2.428), (-2.569, 4.652), (-2.445, 4.017), (0.385, 4.821), (0.070, 5.168), (0.029, 3.073), (-0.279, 5.259), (0.058, 4.734), (0.123, 5.160), (0.241, 3.092), (-2.582, 1.286), (-2.584, 1.262), (-4.817, 2.471), (-2.471, 4.628), (-2.639, 4.015), (-0.321, 5.236), (-2.507, 1.563), (-1.627, 2.543), (-0.146, 5.038), (-1.096, 2.423), (-2.570, 4.352), (-1.636, 2.811), (-2.493, 1.623), (0.048, 4.841), (-2.622, 1.766), (-1.611, 2.683), (-2.396, 4.421), (-2.365, 1.652), (-2.514, 4.232), (-2.570, 1.426), (-0.589, 2.888), (0.198, 4.922), (-1.511, 2.518), (-2.265, 3.864), (-1.390, 2.003), (-2.125, 4.231), (-2.614, 1.722), (-4.734, 2.367), (-2.798, 1.761), (-0.001, 5.127), (-2.355, 4.334), (0.228, 2.943), (-0.358, 5.109), (-3.978, 2.098), (-0.296, 4.863), (-2.586, 4.153), (0.127, 2.876), (-0.083, 4.969), (0.112, 2.802), (-2.278, 4.362), (0.107, 3.025), (-4.334, 2.579), (-4.408, 2.635), (-2.143, 4.382), (0.202, 4.921), (0.038, 4.993), (-0.108, 5.077), (0.092, 5.141), (0.023, 2.665), (-4.120, 2.781), (-2.795, 4.367), (-1.370, 2.504), (-1.523, 2.808), (-2.561, 1.554), (-1.297, 2.645), (-2.587, 1.723), (-2.558, 4.467), (0.196, 2.870), (-1.729, 2.724), (-2.468, 4.589), (-4.391, 2.513), (-0.211, 4.781), (-2.341, 4.574), (-4.471, 2.225), (-2.668, 1.035), (-2.475, 4.201), (-2.895, 4.677), (0.328, 3.426), (-2.629, 1.271), (-0.083, 5.079)}
            \draw[red,fill=red] \Point circle (0.3ex);
            
          \foreach \Point in {(1.803, -4.808), (2.491, -4.173), (0.078, -4.904), (2.677, -4.288), (3.905, -2.574), (1.360, -2.319), (2.326, -1.589), (0.056, -2.938), (2.266, -4.605), (4.362, -2.661), (0.224, -2.658), (4.448, -2.217), (0.020, -3.234), (2.804, -4.599), (2.672, -4.619), (2.509, -4.387), (2.421, -1.582), (2.286, -4.331), (0.213, -2.871), (1.772, -2.564), (-0.197, -5.335), (0.057, -5.101), (1.509, -2.506), (2.865, -4.242), (-0.336, -5.237), (4.421, -2.218), (1.769, -2.389), (4.100, -2.706), (0.072, -3.226), (4.632, -2.425), (0.164, -3.015), (2.806, -4.105), (1.692, -2.636), (0.203, -2.649), (1.296, -2.620), (2.498, -4.601), (4.150, -2.325), (1.524, -2.683), (4.334, -2.451), (2.644, -1.370), (2.592, -1.551), (2.504, -1.466), (4.588, -2.509), (0.046, -3.032), (0.103, -5.010), (2.915, -1.480), (4.246, -2.546), (2.506, -1.340), (4.356, -2.408), (2.766, -4.259), (0.231, -5.364), (-0.132, -3.042), (4.254, -2.429), (0.057, -2.898), (1.392, -2.594), (-1.717, -2.648), (3.003, -1.537), (2.700, -1.619), (2.463, -1.657), (2.272, -1.657), (2.415, -3.826), (0.129, -3.196), (2.386, -4.085), (1.356, -2.609), (1.543, -2.558), (-0.119, -4.996), (0.066, -4.875), (1.281, -2.609), (1.350, -2.585), (1.256, -2.711), (4.141, -2.586), (0.381, -2.761), (2.358, -4.322), (1.409, -2.557), (2.559, -3.954), (-0.278, -5.137), (-0.028, -4.908), (0.616, -2.756), (1.235, -2.512), (4.123, -2.697)}
            \draw[green,fill=green] \Point circle (0.3ex);
            
          \foreach \Point in {(-2.397, 1.408), (-0.138, 4.988), (-0.363, -2.748), (-0.111, 3.148), (3.024, -1.229), (-0.241, -5.133), (-2.651, 1.268), (2.839, -4.357), (-1.538, 2.697), (-2.753, 1.566), (-1.434, 2.096), (-0.281, 2.462), (4.156, -2.461), (0.032, -4.903), (1.599, -2.834), (1.118, -2.610), (2.767, -4.522), (-2.287, 1.407), (4.394, -2.621), (-2.915, 1.448), (0.107, -4.814), (2.200, -4.105), (2.331, -4.324), (-2.010, 4.506), (-2.529, 4.464), (0.119, -5.037), (0.278, 5.139), (-0.429, -2.931), (0.413, 5.365), (-1.473, 2.842), (-1.955, 2.701), (2.190, -1.405), (-2.514, 4.125), (2.543, -4.680), (0.132, -3.002), (0.097, 3.526), (-0.331, 5.359), (2.329, -4.049)}
            \draw[yellow,fill=yellow] \Point circle (0.3ex);
        \end{tikzpicture}}
    \end{minipage}
    \hspace*{0.5ex}
    \begin{minipage}[t]{0.238\textwidth}
        \centering
        \subfiguretitle{(c)}
        \vspace*{1ex}
        \resizebox{\textwidth}{!}{%
        \begin{tikzpicture}[
                        >= stealth, 
                        semithick 
                    ]
                    \tikzstyle{every state}=[
                        draw = black,
                        thick,
                        fill = white,
                        inner sep=0pt,
                        text width=6mm,
                        align=center,
                        scale=0.6
                    ]
          \foreach \x in {1,3,...,24}
            \node[state] (\x) at (\x*360/24-15:3) {\x};
          \foreach \x in {2,4,...,24}
            \node[state] (\x) at (\x*360/24-30:5) {\x};
          \foreach \x/\y in {1/2,3/4,5/6,7/8,9/10,11/12,13/14,15/16,17/18,19/20,21/22,23/24, 10/12,12/14,9/11,11/13,22/24,24/2,21/23,23/1}
            \draw [-] (\x) -- (\y);
          \foreach \x/\y in {6/8,8/10,5/7,7/9,6/4,4/2,5/3,3/1,18/16,16/14,17/15,15/13,18/20,20/22,17/19,19/21}
            \draw [-{Stealth[length=3mm, width=2mm]}] (\x) -- (\y);
          
          \node[state,fill=gray!30] () at (9*360/24-15:3) {9};
          \node[state,fill=gray!30] () at (11*360/24-15:3) {11};
          \node[state,fill=gray!30] () at (13*360/24-15:3) {13};
          \node[state,fill=gray!30] () at (10*360/24-30:5) {10};
          \node[state,fill=gray!30] () at (12*360/24-30:5) {12};
          \node[state,fill=gray!30] () at (14*360/24-30:5) {14};
          
          \node[state,fill=gray!20] () at (21*360/24-15:3) {21};
          \node[state,fill=gray!30] () at (23*360/24-15:3) {23};
          \node[state,fill=gray!30] () at (1*360/24-15:3) {1};
          \node[state,fill=gray!30] () at (22*360/24-30:5) {22};
          \node[state,fill=gray!30] () at (24*360/24-30:5) {24};
          \node[state,fill=gray!30] () at (2*360/24-30:5) {2};

          \foreach \Point in {(-3.144, 0.296), (-1.266, 2.698), (-4.092, 2.691), (-2.986, -0.076), (-3.166, 1.387), (-1.246, 2.358), (-4.346, 2.159), (-3.252, 0.449), (-2.318, 4.241), (-2.841, 0.083), (-2.541, 1.996), (-2.670, 1.506), (-2.373, 1.581), (-2.811, 0.025), (-2.939, 1.387), (-2.413, 1.500), (-1.136, 2.904), (-2.306, 4.224), (-4.957, 0.060), (-2.544, 4.699), (-4.530, 2.337), (-4.699, -0.245), (-1.315, 2.721), (-2.432, 1.660), (-4.424, 2.839), (-4.861, 0.030), (-2.984, 0.070), (-2.541, 4.253), (-2.972, 1.612), (-2.346, 4.153), (-4.891, -0.338), (-3.252, 0.009), (-4.383, 2.624), (-2.573, 4.634), (-1.896, 2.742), (-2.789, 1.491), (-2.500, 1.723), (-3.932, 2.773), (-5.188, 0.260), (-4.882, 0.068), (-1.385, 2.776), (-4.963, 0.145), (-2.623, 1.270), (-3.004, 0.007), (-1.358, 2.678), (-1.707, 3.066), (-2.397, 4.012), (-4.155, 2.235), (-5.478, 0.205), (-4.305, 2.480), (-3.894, 2.576), (-4.867, -0.142), (-2.623, 1.192), (-2.887, 1.400), (-2.688, 1.471), (-2.406, 1.255), (-2.387, 4.604), (-3.942, 2.284), (-1.693, 2.202), (-1.588, 2.544), (-2.621, 1.240), (-2.740, 1.570), (-3.112, 0.220), (-2.615, 0.147), (-2.365, 1.343), (-1.585, 2.807), (-2.565, 4.388), (-2.638, 1.622), (-2.833, 0.092), (-1.797, 2.931), (-2.741, 1.643), (-2.756, 1.422), (-1.826, 2.937), (0.165, 2.916), (-3.127, -0.078), (-2.633, -0.192), (-4.891, -0.059), (-2.454, 4.555), (-2.857, 0.143), (-2.368, 1.221), (-2.762, 1.431), (-2.441, 4.389)}
            \draw[red,fill=red] \Point circle (0.3ex);
            
          \foreach \Point in {(5.004, -0.151), (4.770, -2.620), (2.572, -4.352), (2.989, 0.169), (4.254, -2.661), (2.261, -4.412), (2.332, -4.075), (2.704, -1.818), (4.360, -2.398), (2.354, -4.533), (2.666, -1.555), (1.486, -2.605), (2.653, -1.516), (2.841, -1.349), (2.630, -1.600), (4.629, -2.551), (1.724, -2.527), (1.515, -2.849), (1.322, -2.543), (1.372, -2.556), (2.133, -4.401), (4.468, -2.593), (2.646, -0.042), (2.555, -1.249), (0.426, -2.918), (1.707, -2.581), (1.378, -2.729), (2.466, -4.223), (4.280, -2.297), (4.888, -0.050), (2.646, -4.316), (2.380, -1.469), (2.651, -4.500), (3.043, 0.166), (2.989, 0.093), (2.684, -1.572), (2.495, -1.524), (5.294, -0.068), (2.655, -4.062), (2.485, -4.247), (2.860, -1.378), (1.756, -2.454), (3.255, -0.468), (2.641, -1.590), (2.803, -1.782), (4.080, -2.631), (4.785, -0.125), (2.326, -1.299), (2.640, -4.709), (4.945, -0.152), (4.228, -2.199), (4.218, -2.580), (2.962, 0.118), (1.582, -2.649), (2.963, -1.610), (3.184, 0.038), (2.910, -0.046), (2.112, -1.388), (2.750, 0.141), (3.055, -0.000), (2.684, -1.325), (5.042, 0.232), (4.906, 0.157), (2.275, -4.611), (5.016, -0.175), (3.116, -0.042), (2.760, -4.243), (1.825, -2.605), (1.435, -2.702), (2.628, -4.353), (1.291, -2.728), (4.674, -2.228), (2.427, -4.336), (2.654, -0.039), (2.529, -1.686), (3.101, -0.100), (4.821, -2.152), (2.734, -1.411), (2.724, -1.507), (3.175, -0.240)}
            \draw[green,fill=green] \Point circle (0.3ex);
            
          \foreach \Point in {(-3.204, 0.039), (-4.423, 2.509), (4.287, -2.492), (-1.430, 2.393), (4.347, -2.728), (4.347, -2.511), (-2.616, 1.455), (2.671, -4.547), (-1.509, 2.705), (-4.432, 2.497), (-4.751, -0.183), (-1.640, 2.586), (2.329, -4.312), (5.338, 0.061), (3.131, 0.222), (2.653, -1.883), (2.209, -1.011), (-3.133, -0.072), (2.388, -4.363), (-2.459, 4.547), (2.610, -1.403), (5.204, -0.422), (1.024, -2.806), (-2.801, 0.034), (-4.798, 0.044), (2.670, -1.390), (-2.451, 1.464), (2.242, -1.549), (-3.896, 2.387), (-4.576, 2.639), (-2.613, 1.695), (4.097, -2.850), (-5.267, 0.192), (1.807, -2.469), (4.415, -2.422), (-1.475, 2.578), (-2.966, -0.002), (4.648, -2.678)}
            \draw[yellow,fill=yellow] \Point circle (0.3ex);
        \end{tikzpicture}}
    \end{minipage}
    \hspace*{0.5ex}
    \begin{minipage}[t]{0.23\textwidth}
        \centering
        \subfiguretitle{(d)}
        \vspace*{1ex}
        \resizebox{\textwidth}{!}{%
        \begin{tikzpicture}[
                        >= stealth, 
                        semithick 
                    ]
                    \tikzstyle{every state}=[
                        draw = black,
                        thick,
                        fill = white,
                        inner sep=0pt,
                        text width=6mm,
                        align=center,
                        scale=0.6
                    ]
          \node[state,fill=yellow!60] (1) at (1*360/24-15:3) {1};
          \node[state,fill=yellow!60] (2) at (2*360/24-30:5) {2};
          \node[state,fill=red!60] (3) at (3*360/24-15:3) {3};
          \node[state,fill=red!60] (4) at (4*360/24-30:5) {4};
          \node[state,fill=red!60] (5) at (5*360/24-15:3) {5};
          \node[state,fill=red!60] (6) at (6*360/24-30:5) {6};
          \node[state,fill=red!60] (7) at (7*360/24-15:3) {7};
          \node[state,fill=red!60] (8) at (8*360/24-30:5) {8};
          \node[state,fill=red!60] (9) at (9*360/24-15:3) {9};
          \node[state,fill=red!60] (10) at (10*360/24-30:5) {10};
          \node[state,fill=red!60] (11) at (11*360/24-15:3) {11};
          \node[state,fill=red!60] (12) at (12*360/24-30:5) {12};
          \node[state,fill=yellow!60] (13) at (13*360/24-15:3) {13};
          \node[state,fill=yellow!60] (14) at (14*360/24-30:5) {14};
          \node[state,fill=green!60] (15) at (15*360/24-15:3) {15};
          \node[state,fill=green!60] (16) at (16*360/24-30:5) {16};
          \node[state,fill=green!60] (17) at (17*360/24-15:3) {17};
          \node[state,fill=green!60] (18) at (18*360/24-30:5) {18};
          \node[state,fill=green!60] (19) at (19*360/24-15:3) {19};
          \node[state,fill=green!60] (20) at (20*360/24-30:5) {20};
          \node[state,fill=green!60] (21) at (21*360/24-15:3) {21};
          \node[state,fill=green!60] (22) at (22*360/24-30:5) {22};
          \node[state,fill=green!60] (23) at (23*360/24-15:3) {23};
          \node[state,fill=green!60] (24) at (24*360/24-30:5) {24};
          
          \foreach \x/\y in {1/2,3/4,5/6,7/8,9/10,11/12,13/14,15/16,17/18,19/20,21/22,23/24, 6/8,8/10,5/7,7/9,18/20,20/22,17/19,19/21}
            \draw [-] (\x) -- (\y);
          \foreach \x/\y in {2/4,4/6,1/3,3/5,2/24,24/22,1/23,23/21,14/12,12/10,13/11,11/9,14/16,16/18,13/15,15/17}
            \draw [-{Stealth[length=3mm, width=2mm]}] (\x) -- (\y);
        \end{tikzpicture}}
    \end{minipage} \\[1ex]
    \begin{minipage}[t]{0.294\textwidth}
        \centering
        \subfiguretitle{(e)}
        \vspace*{1ex}
        \includegraphics[width=\linewidth]{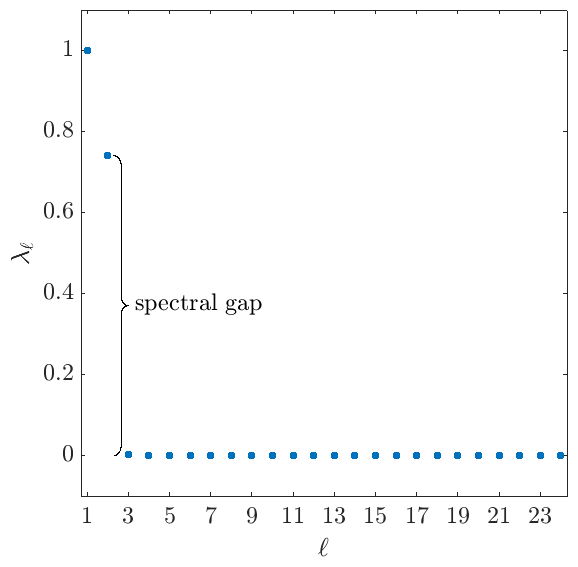}
    \end{minipage}
    \begin{minipage}[t]{0.39\textwidth}
        \centering
        \subfiguretitle{(f)}
        \vspace*{1ex}
        \includegraphics[width=\linewidth]{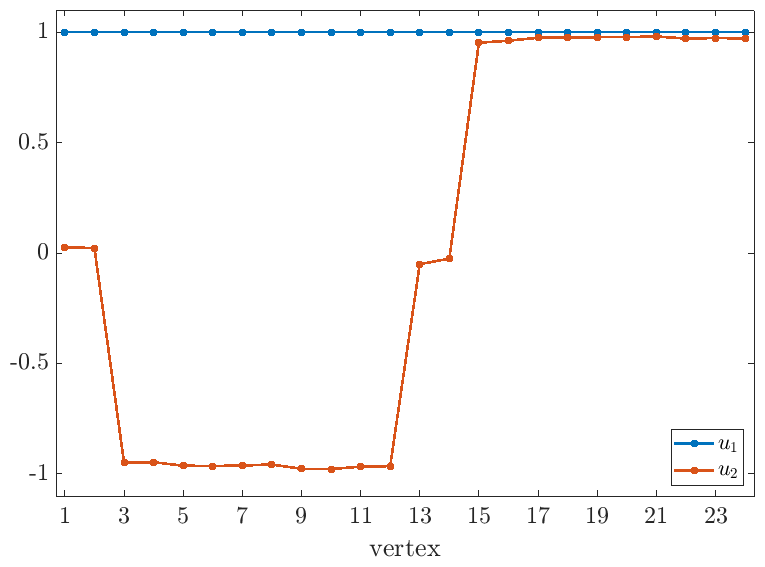}
    \end{minipage}
    \begin{minipage}[t]{0.291\textwidth}
        \centering
        \subfiguretitle{(g)}
        \vspace*{0.55ex}
        \includegraphics[width=\linewidth]{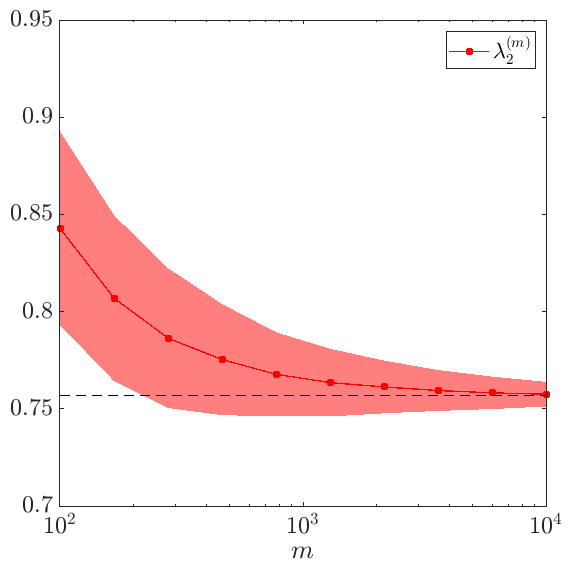}
    \end{minipage}
    \caption{Time-evolving graph with two attracting sets (marked in light gray) at times (a)~$ t = 0 $, (b) $ t = 10 $, and (c) $ t = 20 $. All edge weights are one. Note that some edges are directed and some undirected. After every ten steps, the edges of the graph are `rotated' counterclockwise. The red, green, and yellow dots represent---initially uniformly distributed---random walkers colored according to the coherent sets computed below. (For visualization purposes, we added Gaussian noise to the positions of the random walkers. Nevertheless, the state space is discrete and the random walkers are always assigned to one of the vertices.)~(d)~Clustering of the time-evolving graph at time $ t = 0 $ using SEBA. Random walkers starting in a coherent set will typically be trapped within the set for a long time. The yellow nodes are not assigned to any coherent set and can be regarded as transition regions. The corresponding random walkers will end up in either the red or the green cluster. (e)~Eigenvalues of the matrix $ Q $ estimated from trajectory data. Two eigenvalues are close to 1, which implies that there are two coherent sets. (f)~Eigenvectors corresponding to the two dominant eigenvalues. (g) Convergence of the second eigenvalue using approach A to the eigenvalue computed with the aid of Approach B. The red shaded area represents the standard deviation.}
    \label{fig:Double-well graph}
\end{figure}

Let us now analyze the time-evolving graph shown in Figure~\ref{fig:Double-well graph}(a)--(c). The graph can be viewed as a discretization of a two-dimensional double-well problem with rotating wells. Random walkers will quickly move to one of the two attracting sets and follow the movement of these time-dependent clusters. The behavior of random walkers starting in the same cluster is hence \emph{coherent}. We estimate the forward-backward transition matrix $ Q $ by computing $ F_\tau^{(m)} $ using $ m = 5000 $ random walks of length 100 and then compute the eigenvalues and eigenvectors. Since the matrix $ C_{yy} $ is singular in this case (there are no random walkers in some vertices at the final time), we have to use Tikhonov regularization (or, alternatively, the pseudoinverse). We choose $ \varepsilon = 10^{-8} $. In order to detect coherent sets, we apply SEBA to the dominant two eigenvectors. The results are shown in Figure~\ref{fig:Double-well graph}(d)--(f). Unlike $ k $-means, SEBA does not assign all vertices to clusters. In addition to the coherent sets (marked in red and green), we obtain a transition region (marked in yellow). Figure~\ref{fig:Double-well graph}(g) illustrates the dependence of the estimated second eigenvalue on the number of random walkers. The convergence rate in this case is approximately $ \mathcal{O}\left(m^{-1}\right) $, although in general we can only expect $ \mathcal{O}\left(m^{-\nicefrac{1}{2}}\right) $ since EDMD and its extensions are based on Monte Carlo approximations, see also \cite{WKR15}. \exampleSymbol
\end{example}

The coherence of the sets detected in Example~\ref{ex:Double-well graph} can be decreased by adding reverse edges with a low edge weight to all directed edges. This allows random walkers to transition between the coherent sets more easily. It is important to stress here that the notion of \emph{coherence} is different from the standard interpretation of clusters: Spectral clustering algorithms for static undirected graphs identify fixed sets of vertices with the property that random walkers will remain in such a set for a long time---with a probability that is related to the dominant eigenvalues of the random-walk Laplacian---before moving to another cluster. Coherent sets, on the other hand, are time-evolving structures. Random walkers trapped in such time-dependent sets will with a high probability---now determined by the dominant eigenvalues of the forward-backward Laplacian---move in a coherent way.

\begin{example} \label{ex:Quadruple-gyre graph}

\begin{figure}
    \centering
    \begin{minipage}[t]{0.25\textwidth}
        \centering
        \subfiguretitle{(a)}
        \includegraphics[width=0.97\linewidth]{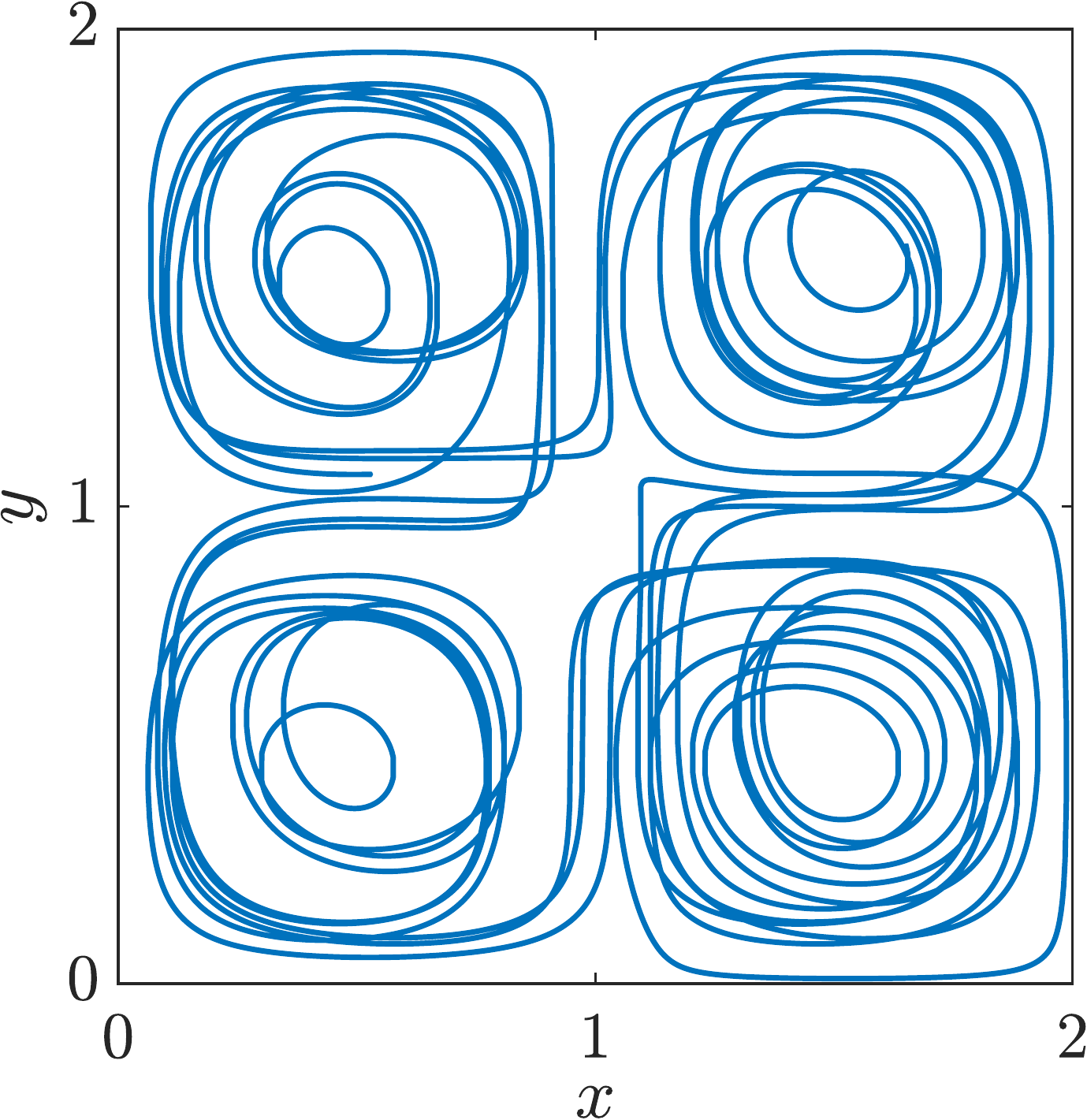}
    \end{minipage}
    \begin{minipage}[t]{0.24\textwidth}
        \centering
        \subfiguretitle{(b) $ t = 0 $}
        \vspace*{0.5ex}
        \includegraphics[width=0.9\linewidth]{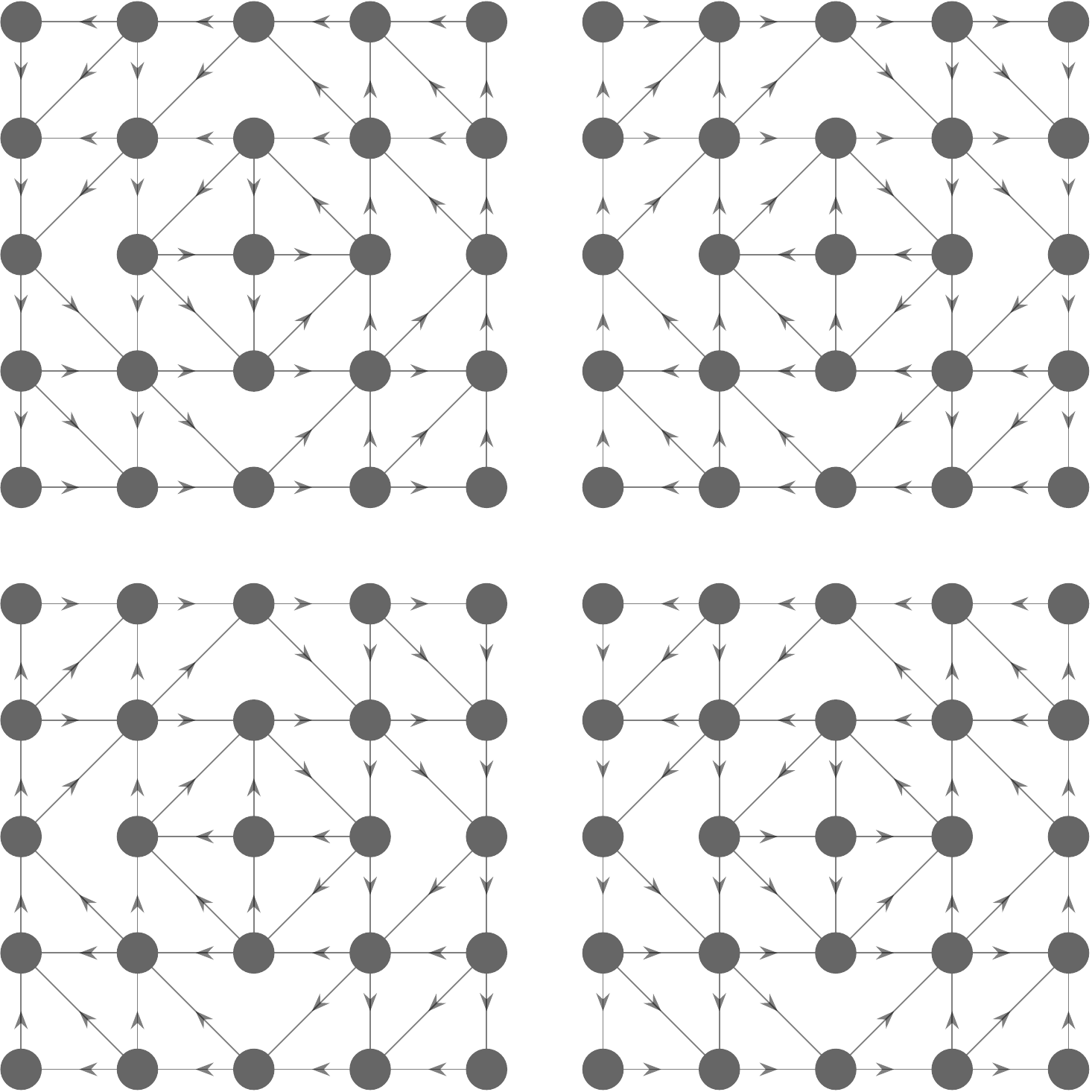}
    \end{minipage}
    \begin{minipage}[t]{0.24\textwidth}
        \centering
        \subfiguretitle{(c) $ t = 1 $}
        \vspace*{0.5ex}
        \includegraphics[width=0.9\linewidth]{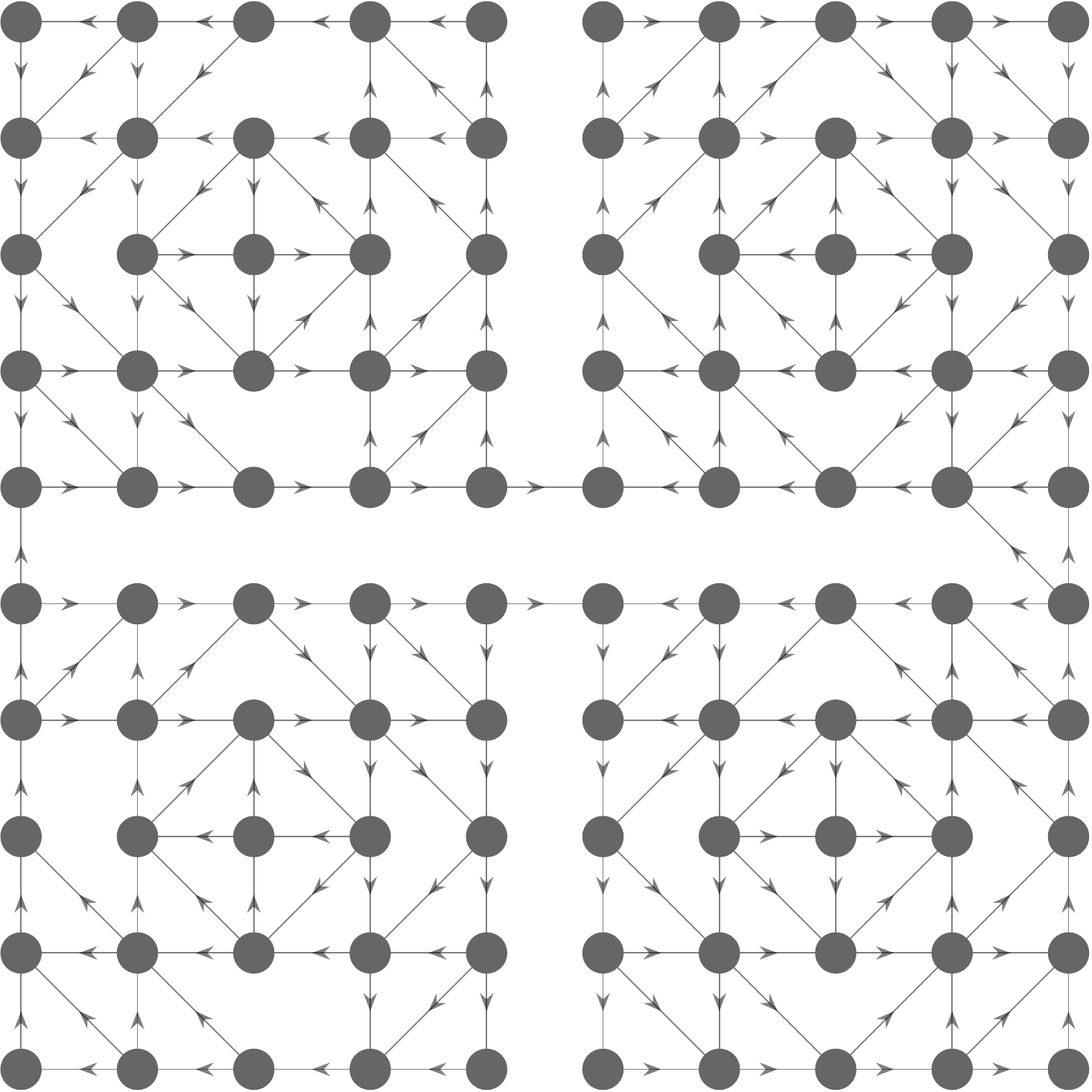}
    \end{minipage}
    \begin{minipage}[t]{0.24\textwidth}
        \centering
        \subfiguretitle{(d) $ t = 4 $}
        \vspace*{0.5ex}
        \includegraphics[width=0.9\linewidth]{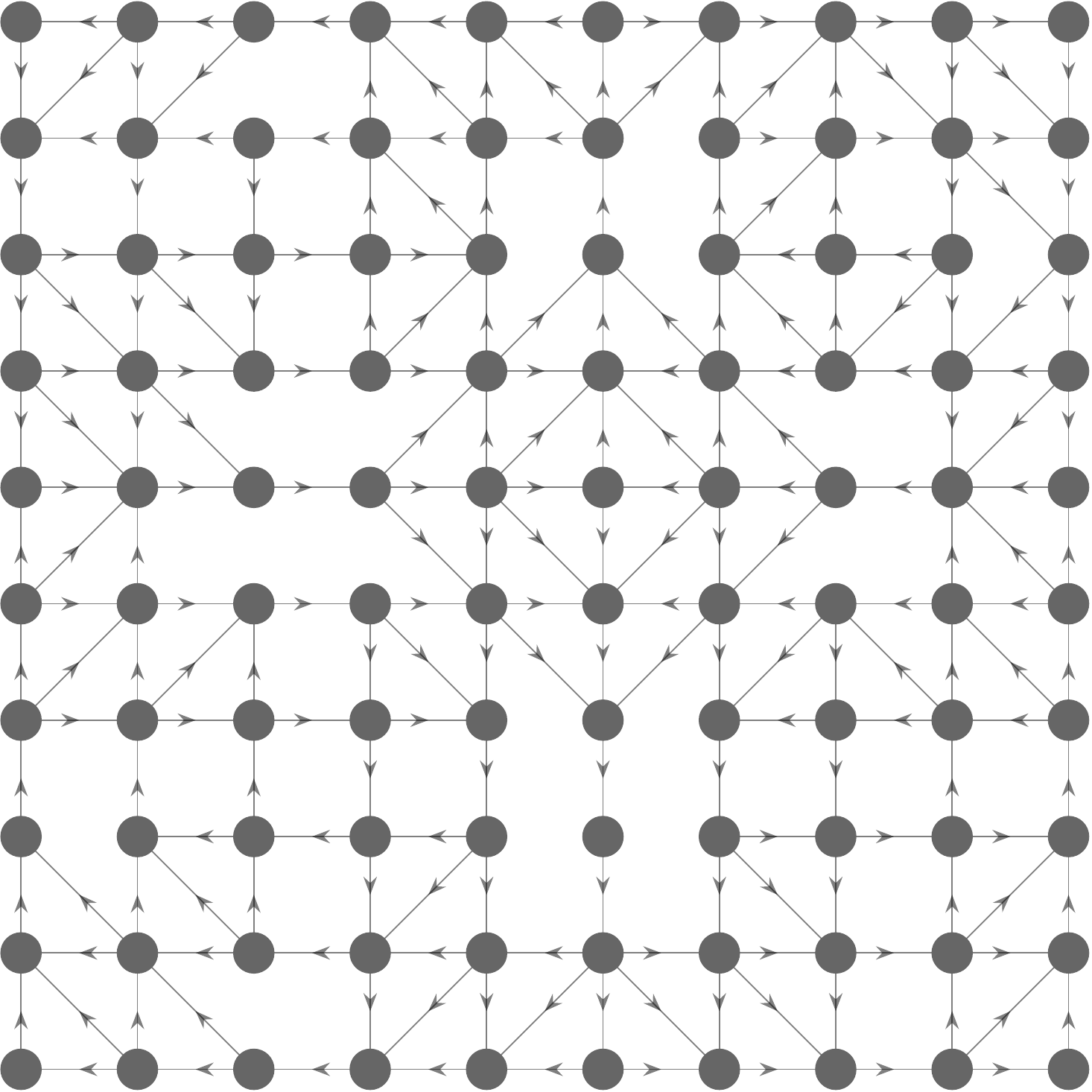}
    \end{minipage} \\[1ex]
    \begin{minipage}[t]{0.25\textwidth}
        \centering
        \subfiguretitle{(h)}
        \vspace*{0.7ex}
        \includegraphics[width=0.99\linewidth]{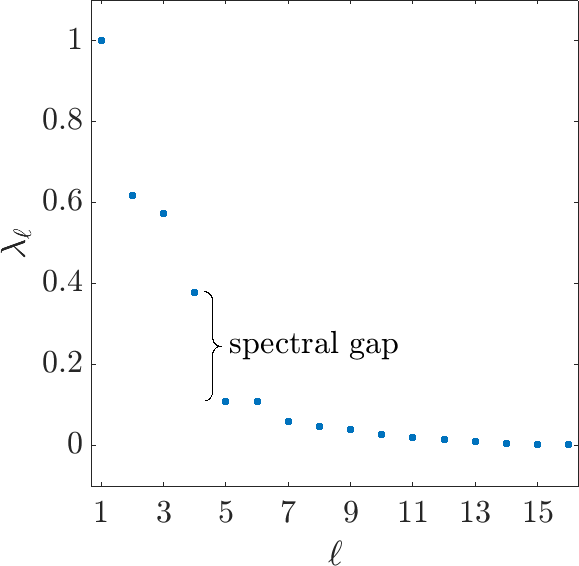}
    \end{minipage}
    \begin{minipage}[t]{0.24\textwidth}
        \centering
        \subfiguretitle{(e) $ t = 10 $}
        \vspace*{0.5ex}
        \includegraphics[width=0.9\linewidth]{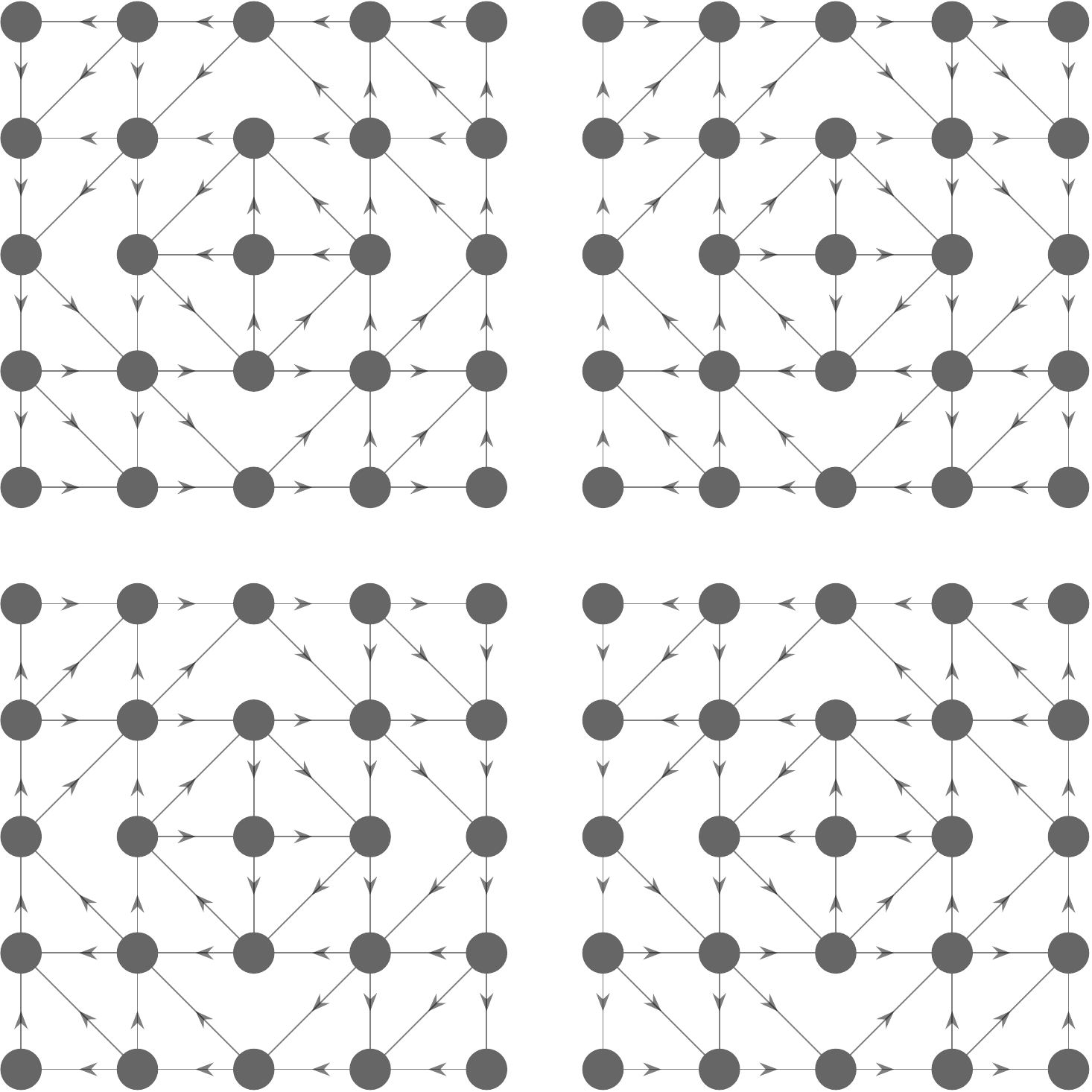}
    \end{minipage}
    \begin{minipage}[t]{0.24\textwidth}
        \centering
        \subfiguretitle{(f) $ t = 11 $}
        \vspace*{0.5ex}
        \includegraphics[width=0.9\linewidth]{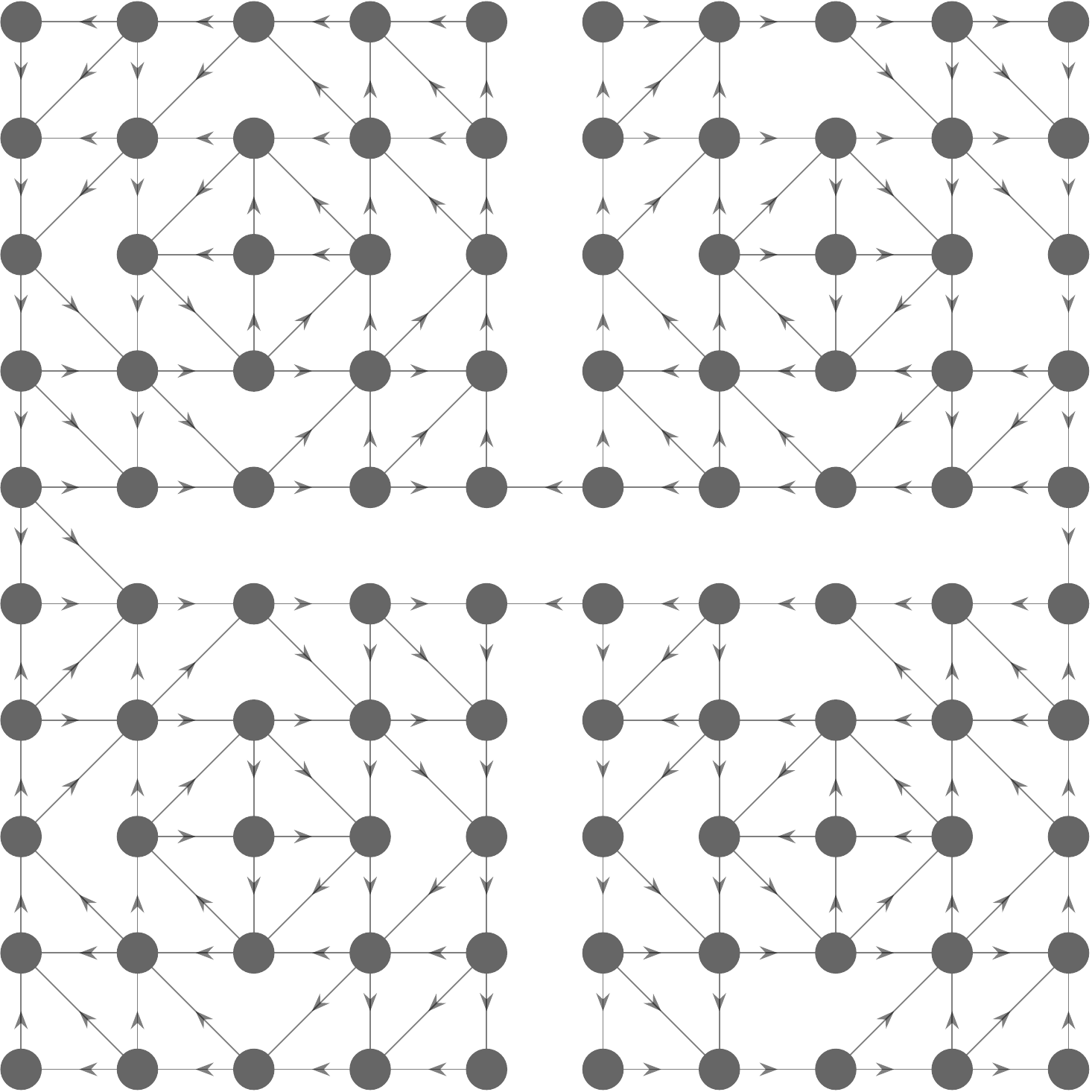}
    \end{minipage}
    \begin{minipage}[t]{0.24\textwidth}
        \centering
        \subfiguretitle{(g) $ t = 14 $}
        \vspace*{0.5ex}
        \includegraphics[width=0.9\linewidth]{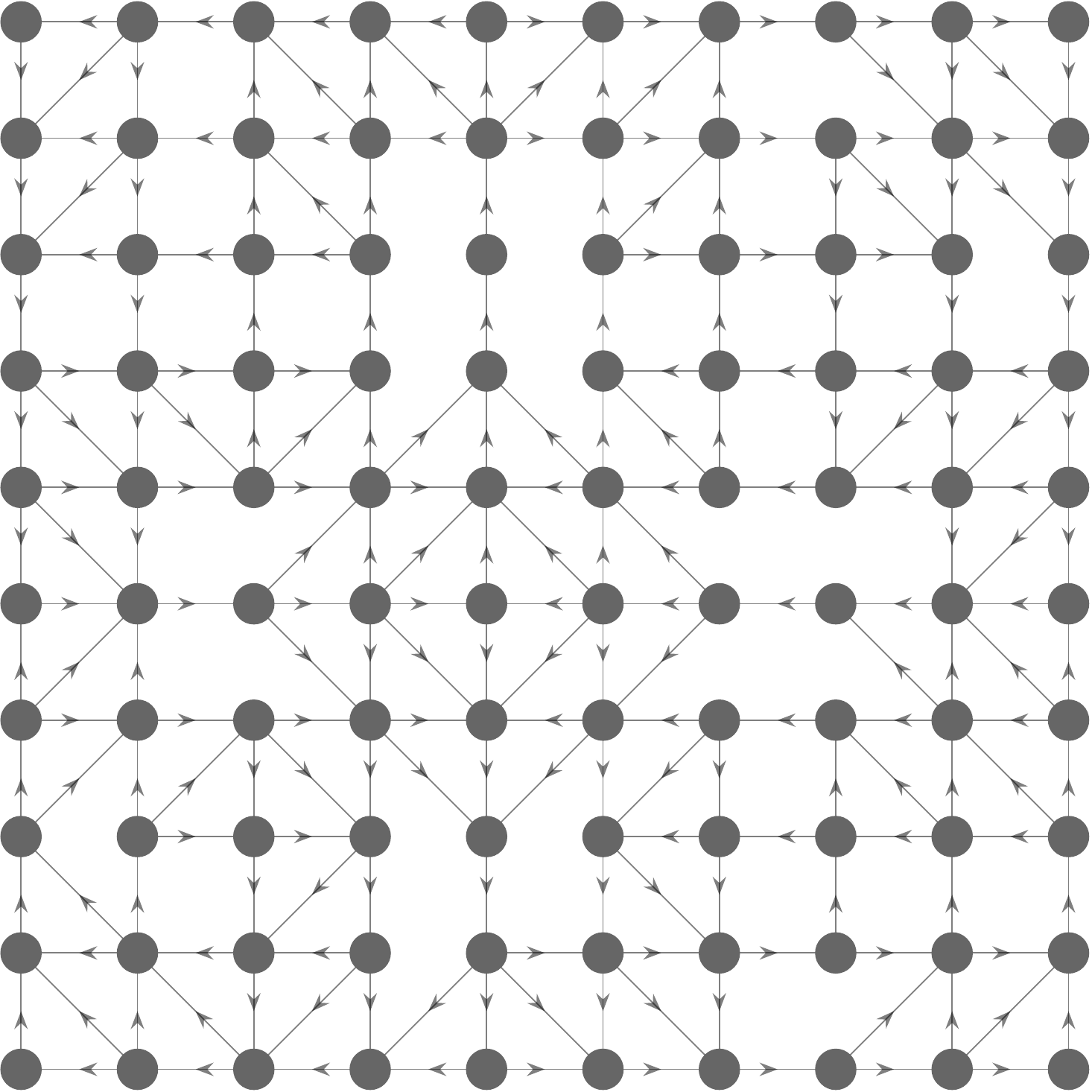}
    \end{minipage} \\[1ex]
    \begin{minipage}[t]{0.49\textwidth}
        \centering
        \subfiguretitle{(i)}
        \vspace*{0.5ex}
        \includegraphics[width=0.7\linewidth]{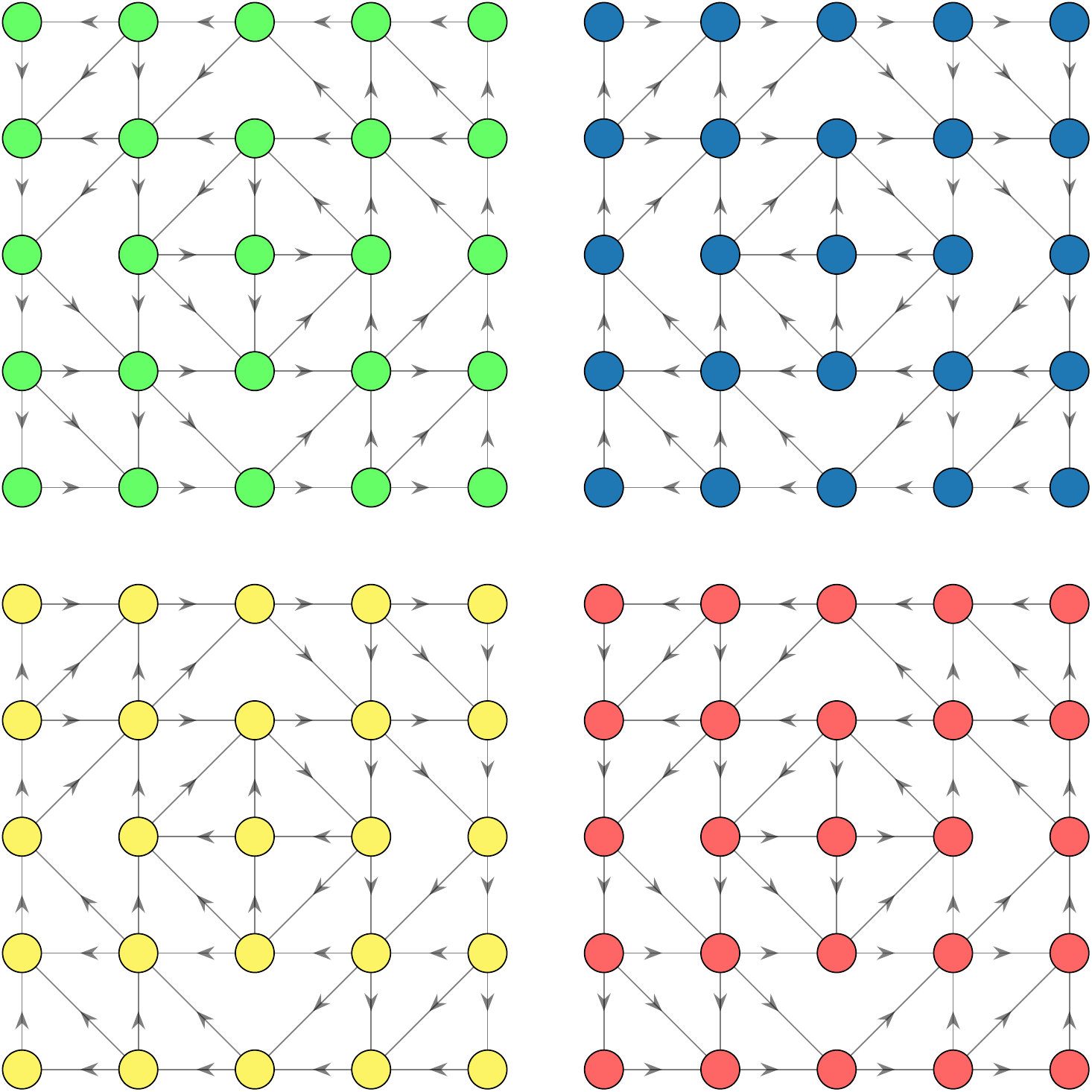}
    \end{minipage}
    \begin{minipage}[t]{0.49\textwidth}
        \centering
        \subfiguretitle{(j)}
        \vspace*{0.5ex}
        \includegraphics[width=0.7\linewidth]{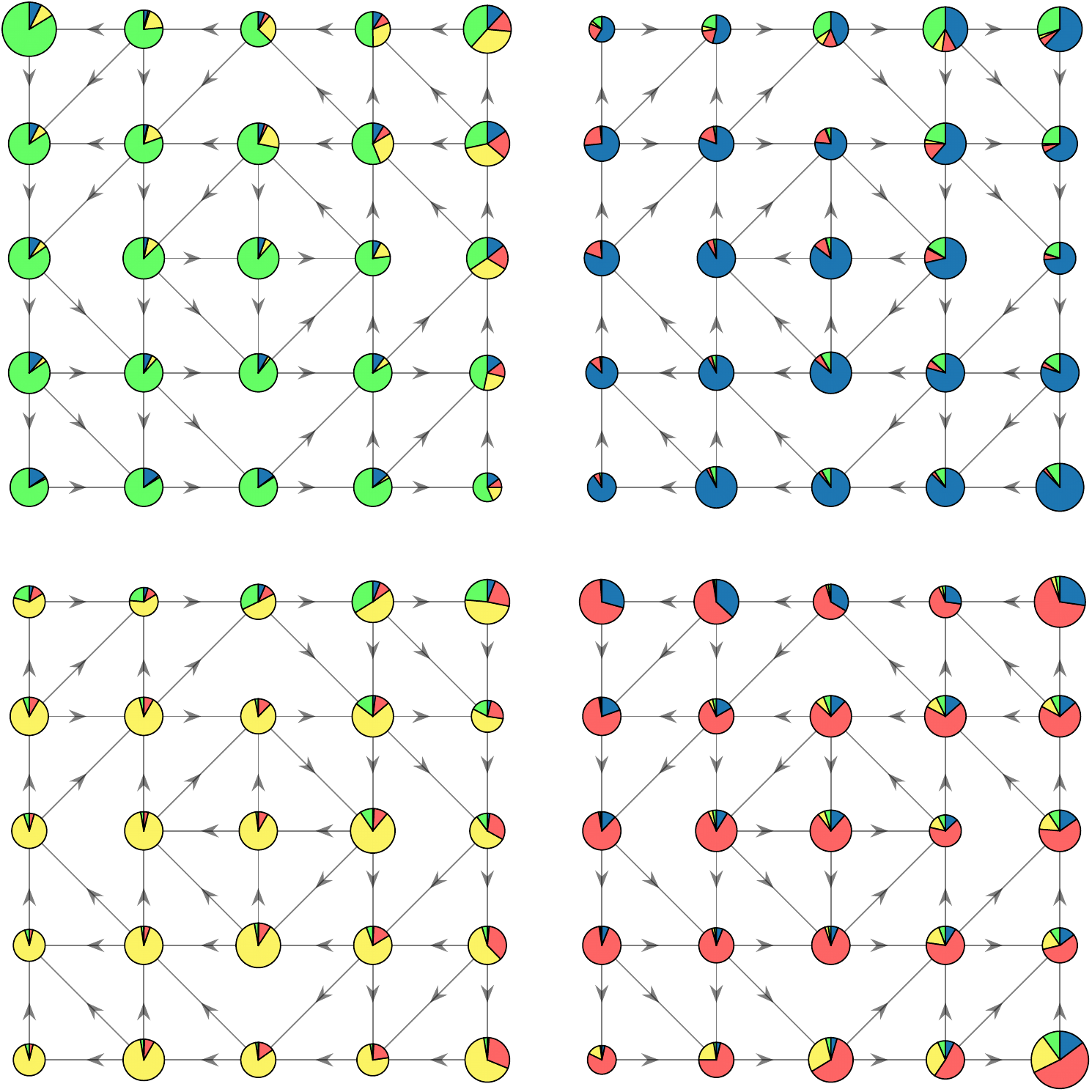}
    \end{minipage}
    \caption{(a) One long trajectory generated by the quadruple-well system defined in \eqref{eq:Quadruple-gyre flow}. (b)--(g)~Time-evolving quadruple-gyre graph at different times $ t $. (h) Eigenvalues of the matrix $ Q $. (i)~Clustering of the graph at time $ t = 0 $ into four sets. (j)~Clustering of the graph at time $ t = 20 $ obtained by mapping indicator functions for each individual cluster forward by the transition probability matrices. The resulting probabilities are plotted as pie charts, where the size of each vertex corresponds to the probability that a random walker will end up in this vertex at $ t = 20 $. It can be seen that information is leaking into the other clusters, but the influx and outflux is comparatively small, although some of the dominant eigenvalues are relatively close to zero. The identified sets are therefore coherent.}
    \label{fig:Quadruple-gyre graph}
\end{figure}

We create a time-evolving graph based on the quadruple-gyre system defined in \cite{DJM16}. Let $ f(t, z) = \delta \ts \sin(\omega \ts t) z^2 + (1 - 2 \ts \delta \sin(\omega \ts t)) z $ and
\begin{equation*}
    g(t, z_1, z_2) = \pi \ts \sin(\pi \ts f(t, z_1)) \cos(\pi \ts f(t, z_2)) \pd{f}{z_2}(t, z_2),
\end{equation*}
where $ \delta = 0.1 $ and $ \omega = 2 \ts \pi $. The quadruple-gyre flow on the 2-torus $ \mathbb{X} = [0, 2] \times [0, 2] $ is then given by
\begin{equation} \label{eq:Quadruple-gyre flow}
    \begin{split}
        \dot{x} = -g(t, x, y), \\\
        \dot{y} = \phantom{-} g(t, x, y).
    \end{split}
\end{equation}
The intersection point of the lines separating the four gyres that are either rotating clockwise or counterclockwise moves periodically along the diagonal, see \cite{DJM16}. We subdivide $ \mathbb{X} $ into $ 10 \times 10 $ equally sized boxes and select $ 16 $ test points per box, which are then mapped forward by the flow map $ \Theta^{0.05} $. Each box is represented by a vertex $ \mc[i]{v} $. If a test point is transported from box $ i $ to box $ j $, we add an edge $ (\mc[i]{v}, \mc[j]{v}) $. We simulate the system from $ \overline{t} = 0 $ to $ \overline{t} = 1 $, resulting in $ 20 $ graphs given by a sequence of adjacency matrices $ A^{(t)} $, $ t = 0, \dots, 19 $ (i.e., $ \overline{t} = 0.05 \ts t $), some of which are shown in Figure~\ref{fig:Quadruple-gyre graph}(b)--(g). At time $ t = 0 $, the clusters corresponding to the four gyres are disconnected, but as time increases transitions between clusters become possible. Next, we apply Approach B to compute the transition probability matrix $ P $ and  define $ Q = P D_\nu^{-1} P^\top $. Applying Algorithm~\ref{alg:directed} results in four coherent sets corresponding to the four vortices of the original quadruple-gyre system as shown in Figure~\ref{fig:Quadruple-gyre graph}(h)--(j). \exampleSymbol
\end{example}

In order to illustrate how the spectral clustering approach can be applied to more complex time-evolving networks, let us consider a high school contact and friendship network.

\begin{example} \label{ex:School network}

\begin{figure}[t]
    \centering
    \begin{minipage}[t]{0.19\textwidth}
        \centering
        \subfiguretitle{(a) $ t = 1 $}
        \vspace*{0.5ex}
        \includegraphics[width=0.95\linewidth]{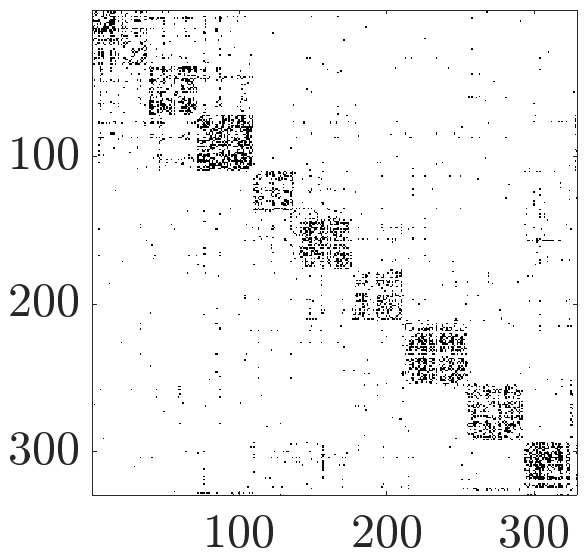}
    \end{minipage}
    \begin{minipage}[t]{0.19\textwidth}
        \centering
        \subfiguretitle{(b) $ t = 2 $}
        \vspace*{0.5ex}
        \includegraphics[width=0.95\linewidth]{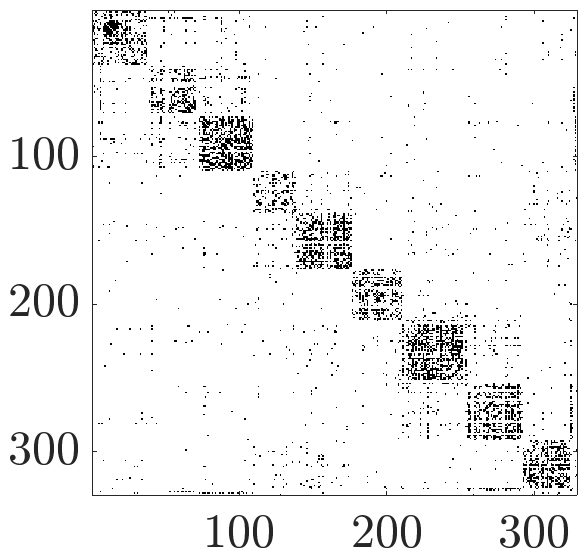}
    \end{minipage}
    \begin{minipage}[t]{0.19\textwidth}
        \centering
        \subfiguretitle{(c) $ t = 3 $}
        \vspace*{0.5ex}
        \includegraphics[width=0.95\linewidth]{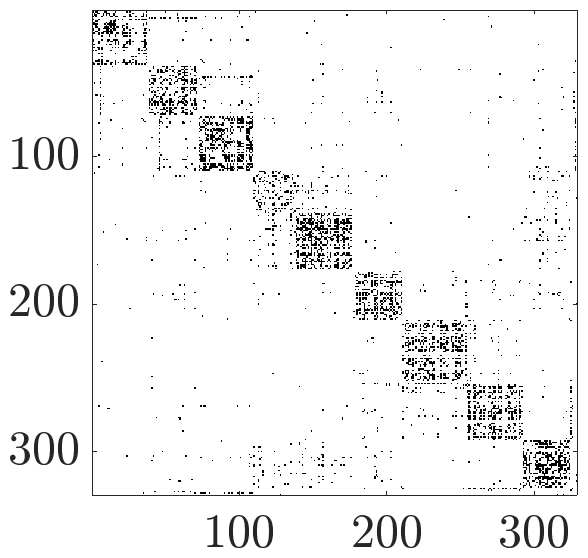}
    \end{minipage}
    \begin{minipage}[t]{0.19\textwidth}
        \centering
        \subfiguretitle{(d) $ t = 4 $}
        \vspace*{0.5ex}
        \includegraphics[width=0.95\linewidth]{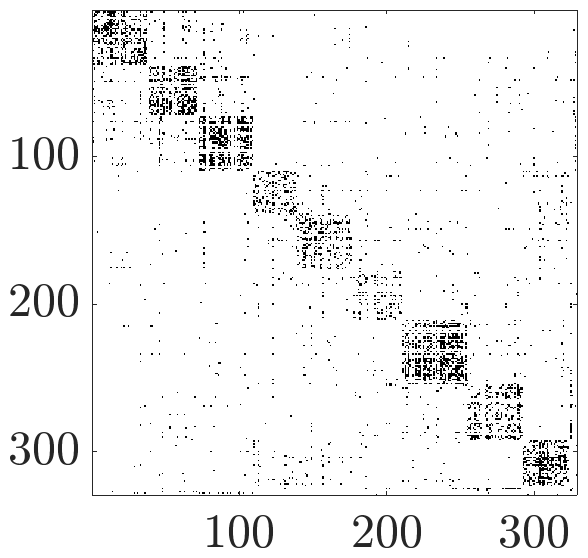}
    \end{minipage}
    \begin{minipage}[t]{0.19\textwidth}
        \centering
        \subfiguretitle{(e) $ t = 5 $}
        \vspace*{0.5ex}
        \includegraphics[width=0.95\linewidth]{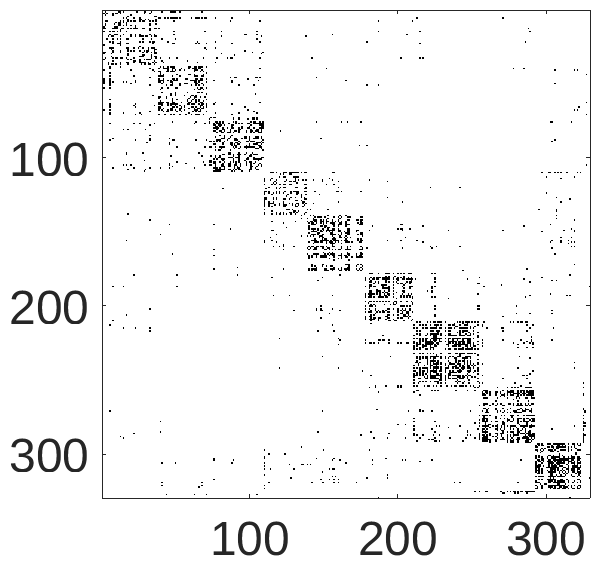}
    \end{minipage} \\[1ex]
    \begin{minipage}[t]{0.55\textwidth}
        \centering
        \subfiguretitle{(f)}
        \vspace*{0.5ex}
        \includegraphics[width=\linewidth]{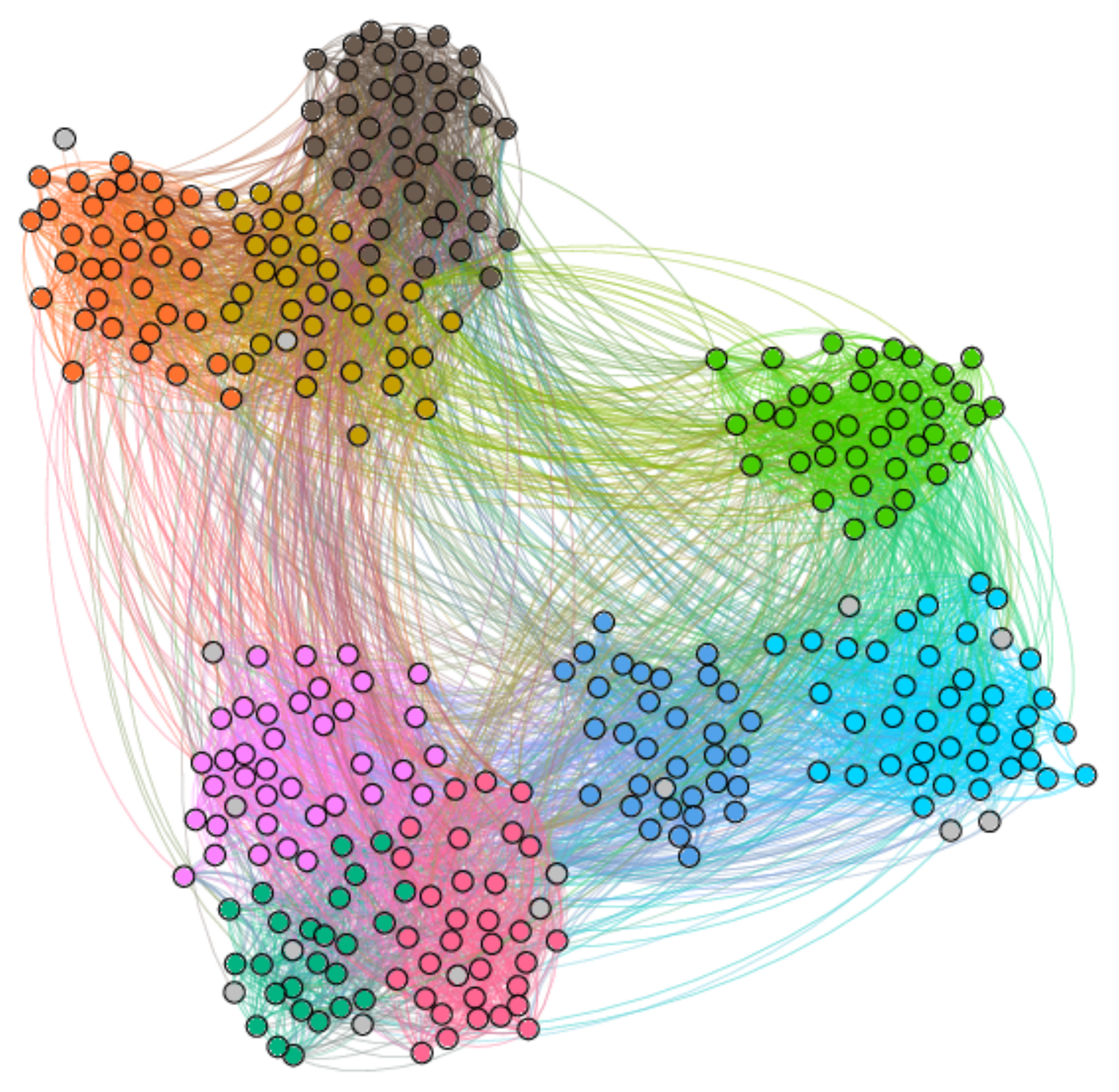}
    \end{minipage}
    \definecolor{gephi1}{RGB}{255, 113, 46}
    \definecolor{gephi2}{RGB}{197, 159, 0}
    \definecolor{gephi3}{RGB}{107, 90, 78}
    \definecolor{gephi4}{RGB}{0, 179, 128}
    \definecolor{gephi5}{RGB}{252, 131, 255}
    \definecolor{gephi6}{RGB}{82, 162, 235}
    \definecolor{gephi7}{RGB}{0, 213, 255}
    \definecolor{gephi8}{RGB}{71, 206, 0}
    \definecolor{gephi9}{RGB}{255, 103, 146}
    \caption{(a)--(e) Adjacency matrices of the time-evolving contact network on days one to five. (f) Spectral clustering of the graph. The clusters are:
    \protect\tikz[baseline] \protect\draw[black,fill=gephi1] (0,0.125) circle (.5ex);~2BIO1,
    \protect\tikz[baseline] \protect\draw[black,fill=gephi2] (0,0.125) circle (.5ex);~2BIO2,
    \protect\tikz[baseline] \protect\draw[black,fill=gephi3] (0,0.125) circle (.5ex);~2BIO3,
    \protect\tikz[baseline] \protect\draw[black,fill=gephi4] (0,0.125) circle (.5ex);~MP*1,
    \protect\tikz[baseline] \protect\draw[black,fill=gephi5] (0,0.125) circle (.5ex);~MP*2,
    \protect\tikz[baseline] \protect\draw[black,fill=gephi6] (0,0.125) circle (.5ex);~PSI*,
    \protect\tikz[baseline] \protect\draw[black,fill=gephi7] (0,0.125) circle (.5ex);~PC,
    \protect\tikz[baseline] \protect\draw[black,fill=gephi8] (0,0.125) circle (.5ex);~PC*, and
    \protect\tikz[baseline] \protect\draw[black,fill=gephi9] (0,0.125) circle (.5ex);~MP.
    The gray vertices have not been assigned to any of the clusters. Two vertices corresponding to students who did not have any contacts with their peers over the entire week are not displayed.}
    \label{fig:School network}
\end{figure}

We analyze data describing the social interactions of more than 300 high school students over a period of one week \cite{MFB15}. Face-to-face contacts were measured using wearable sensors that exchange IDs only when two students are facing each other. The data can be downloaded from \href{http://www.sociopatterns.org/datasets/high-school-contact-and-friendship-networks/}{www.sociopatterns.org}. We construct an undirected time-evolving graph by aggregating the contacts for each of the five consecutive days (smaller time intervals would be possible as well but result in very sparse graphs). The adjacency matrices, shown in Figure~\ref{fig:School network}(a)--(e), exhibit a cluster structure that corresponds to the different specializations: Mathematics and Physics (MP*1, MP*2, and MP), Physics and Chemistry (PC and PC*), Engineering (PSI*), and Biology (2BIO1, 2BIO2, and 2BIO3). We construct again a transition probability matrix $ P $ as described in Example~\ref{ex:Quadruple-gyre graph} (i.e., using Approach B), compute the forward-backward transition matrix $ Q $, and cluster the associated eigenvectors into $ 9 $ coherent sets. The resulting clustering is shown in Figure~\ref{fig:School network}(f).

The results are also summarized in Table~\ref{tab:SN results}. More than 93 \% (307/329) of the students are classified correctly. Approximately 5 \% (17/329) could not be assigned to a class. These are exactly the students who did not have any contacts with other students on the first day. Less than 2 \% (5/329) are misclassified. The incorrectly assigned students (IDs 274, 1543, 446, 9, and 784) either have only few contacts or more contacts with students from other classes. \exampleSymbol

\begin{table}
\renewcommand*{\arraystretch}{1.1}
\newcolumntype{Y}{>{\centering\arraybackslash}X}
\centering
\caption{The entry $(i, j)$ of the table shows how many students belonging to class $ i $ are assigned to class $ j $ by the spectral clustering algorithm. The column ``n/a'' contains the students who could not be assigned to coherent sets.}
\label{tab:SN results}
\scalebox{0.7}{
\begin{tabularx}{1.1\textwidth}{|c|*{10}{Y|} }
    \hline & & & & & & & & & & \\[-2ex]
          & 2BIO1 & 2BIO2 & 2BIO3 & MP*1 & MP*2 & PSI* & PC & PC* & MP & n/a \\[0.5ex] \hline
    2BIO1 &   35 &      &      &      &      &      &      &      &      &    1 \\
    2BIO2 &      &   33 &      &      &      &      &      &      &      &    2 \\
    2BIO3 &      &      &   40 &      &      &      &      &      &      &      \\
    MP*1  &      &      &      &   25 &    1 &      &      &      &      &    3 \\
    MP*2  &      &      &      &      &   35 &      &      &      &    1 &    2 \\
    PSI*  &      &      &      &      &    1 &   32 &      &      &      &    1 \\
    PC    &      &      &      &      &      &      &   39 &      &    1 &    4 \\
    PC*   &      &    1 &      &      &      &      &      &   38 &      &    1 \\
    MP    &      &      &      &      &      &      &      &      &   30 &    3 \\ \hline
\end{tabularx}}
\end{table}

\end{example}

This demonstrates that the spectral clustering algorithm correctly identifies clusters in time-evolving networks. Interestingly, Gephi's \cite{BHJ09} graph layout algorithm automatically places the Biology classes (2BIO1, 2BIO2, and 2BIO3) and Mathematics and Physics classes (MP*1, MP*2, and MP) in close proximity, which indicates that there is more interaction between students belonging to classes specializing in the same subject.

\section{Conclusion}
\label{sec:Conclusion}

We showed how spectral clustering of undirected graphs is related to computing eigenfunctions of the Koopman operator pertaining to the associated random walk process. For such reversible processes the operator is self-adjoint and the spectrum real-valued. If the process, however, is non-reversible or time-inhomogeneous, we in general obtain complex eigenvalues, which cannot be interpreted as relaxation time scales anymore. For such systems, the conventional concept of metastability no longer applies since metastable sets might now be time-dependent and move in state space. This leads to the definition of coherent sets, which have been extensively used to study fluid flows such as ocean or atmospheric dynamics. By defining transfer operators on graphs, it is possible to detect coherent sets in directed and time-evolving networks. These sets have the property that random walkers starting in such a cluster behave in a coherent way. Our approach can be regarded as a straightforward extension of the popular spectral clustering approach for undirected graphs. It follows the same steps, but replaces the standard random-walk Laplacian by a forward-backward counterpart. We illustrated that the generalized Laplacian leads to meaningful and interpretable clusters. Moreover, we showed how time-evolving benchmark graphs with intricate but intuitively accessible cluster structure can be constructed by discretizing time-inhomogeneous molecular dynamics or fluid dynamics problems. These graphs could, for instance, be used to compare various clustering algorithms for time-varying networks. Additionally, the \emph{probability leakage}---i.e., the amount of information escaping the clusters---could be used as a quality measure. This would allow for a systematic comparison of different clustering techniques, which will be considered in future work.

The next steps include analyzing the properties of the forward-backward Laplacian in detail and applying these methods to complex time-evolving graphs such as social networks in order to identify, for example, groups of users sharing similar behavioral patterns. Finding large-scale clusters in complex networks is often challenging and might necessitate fine-tuning and optimizing the proposed approach. Furthermore, it would also require efficient, reliable, and ideally easily parallelizable implementations of the algorithms. Another open question is how the chosen lag time $ \tau $ affects the detected coherent sets. The current (data-driven) spectral clustering method for time-evolving graphs takes only the start and end points of the trajectories into account. If the dynamics are smooth, this is often sufficient. More complex problems might benefit from multi-view CCA \cite{ShCh04}, which would take multiple time steps into account. Alternatively, the time-averaging approach using forward-backward diffusion matrices proposed in \cite{BaKo17} to construct space-time diffusion maps could be applied in our setting as well. These topics will be the subject of future research.

\section*{Data availability}

The data and code that support the findings of this study are openly available at \url{https://github.com/sklus/d3s/}.

\section*{Acknowledgments}

We would like to thank the reviewers for helpful comments and suggestions.

\bibliographystyle{unsrturl}
\bibliography{DGC}

\appendix

\section{Convergence proof}

\begin{proof}[Proof of Proposition~\ref{prop:Convergence}]
EDMD with indicator functions is equivalent to using Ulam's method~\cite{Ulam60}, see \cite{KKS16}. Let $ \delta_{ij} $ denote the Kronecker delta and $ \#\{ A \} $ the cardinality of the set $ A $. We have
\begin{align*}
    \big[C_{xx}\big]_{ij} &= \frac{1}{m} \sum_{k=1}^m \phi_i(x^{(k)}) \ts \phi_j(x^{(k)}) = \delta_{ij} \frac{\#\{ x^{(k)} = i \}}{m} \underset{\scriptscriptstyle m \rightarrow \infty}{\longrightarrow} \frac{\delta_{ij}}{n}, \\
    \big[C_{xy}\big]_{ij} &= \frac{1}{m} \sum_{k=1}^m \phi_i(x^{(k)}) \ts \phi_j(y^{(k)}) = \frac{\#\{ x^{(k)} = i \; \wedge \; y^{(k)} = j \}}{m} \underset{\scriptscriptstyle m \rightarrow \infty}{\longrightarrow} \frac{p_{ij}}{n}, \\
    \big[C_{yy}\big]_{ij} &= \frac{1}{m} \sum_{k=1}^m \phi_i(y^{(k)}) \ts \phi_j(y^{(k)}) = \delta_{ij} \frac{\#\{ y^{(k)} = j \}}{m} \underset{\scriptscriptstyle m \rightarrow \infty}{\longrightarrow} \frac{\delta_{ij}}{n} \sum_{\ell=1}^n p_{\ell j} = \frac{\delta_{ij}}{n} \ts \nu(\mc[j]{v}).
\end{align*}
It follows that $ \widehat{K}_\tau^{(m)} = C_{xx}^+ \ts C_{xy} \underset{\scriptscriptstyle m \rightarrow \infty}{\longrightarrow} P $ and that $ \widehat{P}_\tau^{(m)} = C_{xx}^+ \ts C_{yx} \underset{\scriptscriptstyle m \rightarrow \infty}{\longrightarrow} P^\top $. Similarly, $ \widehat{F}_\tau^{(m)} = C_{xx}^+ \ts C_{xy} \ts C_{yy}^+ \ts C_{yx} \underset{\scriptscriptstyle m \rightarrow \infty}{\longrightarrow} P D_\nu^{-1} P^\top = Q $, where we assume that $ D_\nu $ is invertible.
\end{proof}

\end{document}